\crefname{hypothesis}{Hypothesis}{Hypotheses}
\Crefname{ALC@unique}{Line}{Lines}
\newcommand{\delths}{\Delta\theta^*}
\newcommand{\delthsbytwo}{\frac{\Delta\theta^*}{2}}
\newcommand{\delk}{\delta_k}
\title{A Two-Dimensional Map of Arbitrary Period\thanks{Submitted to the editors DATE.
\funding{NSF Grant CMMI-2043464}}}
\author{Aakash Khandelwal\thanks{Department of Mechanical Engineering, Michigan State University, East Lansing, MI (\email{khande10@egr.msu.edu}, \email{mukherji@egr.msu.edu}).}
\and Ranjan Mukherjee\footnotemark[2]}
\begin{document}
\maketitle

%% ------------------------------------------------------------------
%% ABSTRACT
%% ------------------------------------------------------------------
\begin{abstract}

We introduce a two-dimensional discrete-time dynamical system which represents the evolution of an angle and angular velocity. While the angle evolves by a fixed amount in every step, the evolution of the angular velocity is governed by a nonlinear map. We study the periodicity and stability of solutions to the system for a range of parameter values and initial conditions. The coupled system is shown to be periodic for certain parameter choices and initial conditions. In the limit, when the change in the angle tends to zero, the map is equivalent to the dynamics of a simple pendulum. Based on the integral of motion of the pendulum, an approximate invariant for the system is obtained. Simulations showing the behavior of the system for different parameter values and initial conditions are presented.
\end{abstract}

\begin{keywords}
Discrete-time, Dynamical systems, Periodic orbits
\end{keywords}

\begin{MSCcodes}
37C25, 37M05, 39A11
\end{MSCcodes}

\section{Introduction}
\label{sec:intro}

We consider the following two-dimensional discrete-time autonomous dynamical system in the states $\theta \in S^1$, $S^1 = R \mod 2\pi$, and $\omega \in R^+$, which represent an angle and angular velocity respectively:
\begin{align} 
    \theta_{k+1} &= \theta_k + \delths \label{eq:thkp1-zero-dyn} \\
    \omega_{k+1} - \omega_k &= P \sin\theta_k \left[ \frac{1}{\omega_k} + \frac{1}{\omega_{k+1}} \right] \label{eq:omkp1-zero-dyn-reduced}
\end{align}

\noindent where $\delths \in (0, \pi)$, and the parameter $P$ is given by
\begin{equation} \label{eq:P}
    P \triangleq \pm \frac{g \delths{^2}}{2\ell \sin(\delths)}
\end{equation}

\noindent with $g$ denoting the acceleration due to gravity and $\ell$ denoting some length. It can be verified that $P$ has the unit of s$^{-2}$ and this makes \eqref{eq:omkp1-zero-dyn-reduced} dimensionally consistent. The map \eqref{eq:thkp1-zero-dyn} is the linear, unperturbed circle map \cite{gilmore_topology_2011} and its behavior is well understood, but the map \eqref{eq:omkp1-zero-dyn-reduced} is nonlinear. To our knowledge, the coupled system in \eqref{eq:thkp1-zero-dyn} and \eqref{eq:omkp1-zero-dyn-reduced} has not appeared in the literature. We are interested in the periodicity and stability of solutions to the system, and explore the existence of invariants along solutions to the system.

The existence of periodic solutions for several classes of difference equations has been reviewed in \cite{dannan_periodic_2000}. Conditions for the existence of periodic solutions to some classes of second-order dynamical systems are considered in \cite{li_periodic_2009, maroncelli_periodic_2016}, and the periodic character and solutions of rational difference equations has been studied in \cite{patula_oscillation_2002, cima_periodic_2004, bajo_periodicity_2006, stevic_short_2004, stevic_global_2006, stevic_system_2011}. Invariants for difference equations are studied in \cite{schinas_invariants_1997, papaschinopoulos_invariants_2001}. None of these results directly apply to the system under consideration owing to the structure of \eqref{eq:omkp1-zero-dyn-reduced}.\

Two dimensional maps such as the H\'enon map \cite{henon_two-dimensional_1976, hitzl_exploration_1985} are known to exhibit fixed points, stable periodic orbits, and strange attractors based on parameter choices. However, the resulting solution is not immediately obvious from the parameter choices. Since \eqref{eq:thkp1-zero-dyn} may be periodic with a known period depending on the choice of $\delths$, we study the existence of periodic solutions to the coupled system. It is proved that, when $\delths$ is an integer submultiple of $2\pi$, the system permits periodic orbits for specific initial values of $\theta$ and a large range of initial values of $\omega$. There are infinitely many such orbits, each of which is stable but not attractive. For different initial values of $\theta$, simulations show that periodicity is lost.

In the limit, when $\delths$ is infinitesimally small, our discrete-time dynamical system is equivalent to a continuous time system with the dynamics of a simple pendulum. Motivated by the existence of an integral of motion for the simple pendulum, we propose a quantity that is approximately invariant along solutions of the dynamical system in \eqref{eq:thkp1-zero-dyn} and \eqref{eq:omkp1-zero-dyn-reduced}, and is equivalent to the integral of motion for the pendulum in the limiting case. The approximately invariant quantity allows $\omega$ to be expressed as an explicit periodic function of $\theta$, which is not possible from \eqref{eq:omkp1-zero-dyn-reduced}. For sufficiently large $\omega$, this periodic function closely approximates the behavior of the original dynamical system. Extensive simulation results show the periodic and aperiodic motion of the system for multiple choices of $\delths$ and initial conditions.

\section{Analysis of the Dynamical System} \label{sec:dynamical-sys}

The map \eqref{eq:thkp1-zero-dyn} represents a rational rotation if $\delths = (p/q)2\pi$, where $p, q \in Z^+$. If $\gcd(p, q) = 1$, \eqref{eq:thkp1-zero-dyn} is periodic with period $q$ since
\begin{equation} \label{eq:th-rational-periodicity}
    \theta_{k+q} = \theta_k + 2\pi p = \theta_k
\end{equation}

\noindent and the state $\theta$ returns to its original value after $p$ revolutions around the circle. In particular, if $p = 1$ and $q = N$, $\delths = 2\pi/N$ and \eqref{eq:thkp1-zero-dyn} is periodic with period $N$, where $\theta$ returns to its original value after a single revolution. If $\delths$ is an irrational submultiple of $2\pi$, \eqref{eq:thkp1-zero-dyn} represents an irrational rotation and exhibits quasiperiodic motion \cite{gilmore_topology_2011, turer_dynamical_nodate}. It should be noted that we study the dynamical system subject to the condition $\delths \in (0, \pi)$.

The map \eqref{eq:omkp1-zero-dyn-reduced} can be expressed as a quadratic equation in the variable $\omega_{k+1}$:
\begin{equation}
    \omega_k \omega_{k+1}^2 - (\omega_k^2 + P \sin \theta_k) \omega_{k+1} - P \sin \theta_k \omega_k = 0
\end{equation}

\noindent which can be solved for $\omega_{k+1}$ as
\begin{equation} \label{eq:omkp1-quadratic-sol}
    \omega_{k+1} = \frac{\omega_k}{2} + \frac{P \sin\theta_k}{2 \omega_k} \pm \frac{\omega_k}{2} \sqrt{1 + \frac{6 P \sin\theta_k}{\omega_k^2} + \frac{P^2 \sin^2\theta_k}{\omega_k^4}}
\end{equation}

\noindent The solution is real when
\begin{equation} \label{eq:bound-omk-real-sol}
    1 + \frac{6 P \sin\theta_k}{\omega_k^2} + \frac{P^2 \sin^2\theta_k}{\omega_k^4} \geq 0 \qquad \forall k
\end{equation}

\begin{remark} \label{rem2.1}
When $P$ is positive (negative), the above condition is satisfied $\forall \theta_k \in [0, \pi]$ ($\forall \theta_k \in [\pi, 2\pi]$) regardless of the value of $\omega_k$, since $P\sin\theta_k$ is always positive.
\end{remark}

It must now be ascertained whether the positive or negative square root must be used in the quadratic solution \eqref{eq:omkp1-quadratic-sol}. In particular, if $\theta_k \in \{0, \pi\}$, it follows from \eqref{eq:omkp1-zero-dyn-reduced} that $\omega_{k+1} = \omega_k$, and this solution is obtained by selecting the positive square root in \eqref{eq:omkp1-quadratic-sol}. Selecting the negative square root gives $\omega_{k+1} = 0$, which is incorrect.

For any value of $\theta_k$, if $\omega_k$ is sufficiently large such that
\begin{equation} \label{eq:bound-omk-real-sol-taylor}
    \abs{\frac{6 P \sin\theta_k}{\omega_k^2} + \frac{P^2 \sin^2\theta_k}{\omega_k^4}} < 1
\end{equation}

\noindent we can expand the square root in \eqref{eq:omkp1-quadratic-sol} as a convergent Taylor series expansion which gives the following expressions for the positive and negative square root solutions:
\begin{equation} \label{eq:omkp1-final-taylor}
    \omega_{k+1} = \omega_k + \frac{2 P \sin\theta_k}{\omega_k} - \frac{2 P^2 \sin^2\theta_k}{\omega_k^3} + \frac{6 P^3 \sin^3\theta_k}{\omega_k^5} - \frac{22 P^4 \sin^4\theta_k}{\omega_k^7} + \order{\frac{P^5 \sin^5\theta_k}{\omega_k^{9}}}
\end{equation}

\begin{equation}
    \omega_{k+1} = - \frac{P \sin\theta_k}{\omega_k} + \frac{2 P^2 \sin^2\theta_k}{\omega_k^3} - \frac{6 P^3 \sin^3\theta_k}{\omega_k^5} + \frac{22 P^4 \sin^4\theta_k}{\omega_k^7} + \order{\frac{P^5 \sin^5\theta_k}{\omega_k^{9}}}
\end{equation}

\noindent It can be seen from the above two equations that, for sufficiently large values of $\omega_k$, selecting the negative square root gives values of $\omega_{k+1}$ which have magnitudes much smaller than $\omega_k$ and may violate the condition $\omega \in R^+$. In this paper, we focus on solutions obtained using the positive square root, which always ensures a feasible solution. Thus, the dynamics in \eqref{eq:omkp1-quadratic-sol} is rewritten as
\begin{equation} \label{eq:omkp1-final}
    \omega_{k+1} = \frac{\omega_k}{2} + \frac{P \sin\theta_k}{2 \omega_k} + \frac{\omega_k}{2} \sqrt{1 + \frac{6 P \sin\theta_k}{\omega_k^2} + \frac{P^2 \sin^2\theta_k}{\omega_k^4}}
\end{equation}

\noindent The above equation expresses $\omega_{k+1}$ explicitly in terms of $\theta_k$ and $\omega_{k}$ and will be used in our simulations.

\section{Periodicity for Specific Initial Conditions} \label{sec:proof-periodicity}

We begin with the following assumption which guarantees the existence of periodic solutions to \eqref{eq:thkp1-zero-dyn} and \eqref{eq:omkp1-zero-dyn-reduced}.

\begin{assumption} \label{asm:omk-min-periodic}
For any initial condition $(\theta_1, \omega_1)$,
\begin{equation*} 
    \omega_{k, \mathrm{min}}^2 > |P|, \qquad  \omega_{k, \mathrm{min}} \triangleq \min_{k} \omega_k
    \end{equation*}
\end{assumption}

\begin{lemma} \label{lem:zero}
The dynamical system in \eqref{eq:thkp1-zero-dyn} and \eqref{eq:omkp1-zero-dyn-reduced} is periodic with period $N$ for $\theta_1 = 0$ and initial conditions satisfying Assumption \ref{asm:omk-min-periodic} when 
\begin{equation} \label{eq:delths}
   \delths = \frac{2\pi}{N}, \quad N \in Z^+, \quad N \geq 3
\end{equation}
\end{lemma}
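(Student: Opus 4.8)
The plan is to handle the angle trivially and to do the real work on the angular velocity. Since $\theta_1=0$ and $\delths=2\pi/N$, iterating \eqref{eq:thkp1-zero-dyn} gives $\theta_k=(k-1)\delths\pmod{2\pi}$, so $\theta_{k+N}=\theta_k$; because $\gcd(1,N)=1$ the minimal period of $\{\theta_k\}$ is exactly $N$. Moreover $\sin\theta_1=0$, so \eqref{eq:omkp1-zero-dyn-reduced} immediately yields $\omega_2=\omega_1$, and it therefore suffices to prove $\omega_{N+1}=\omega_2$; the coupled state then has minimal period exactly $N$, since it cannot return sooner than its $\theta$-component does.

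Write the one-step update as $\omega_{k+1}=F_{d_k}(\omega_k)$, where $d_k\triangleq\sin\theta_k$ and $F_d$ is the explicit positive-square-root map \eqref{eq:omkp1-final} with $\sin\theta_k$ replaced by $d$. Two structural facts drive the proof. First, \emph{reflection antisymmetry of the forcing}: from $\theta_{N+2-k}=(N+1-k)\delths\equiv-(k-1)\delths=-\theta_k\pmod{2\pi}$ one gets $d_{N+2-k}=-d_k$ for $k=2,\dots,N$, and, when $N$ is even, the central value $d_{N/2+1}=\sin\pi=0$. Second, \emph{reversibility of the $\omega$-map}: rewriting \eqref{eq:omkp1-zero-dyn-reduced} as $(\omega_{k+1}-\omega_k)\,\omega_k\omega_{k+1}=P\sin\theta_k(\omega_k+\omega_{k+1})$ shows that the relation is invariant under the simultaneous swap $\omega_k\leftrightarrow\omega_{k+1}$ and $\sin\theta_k\to-\sin\theta_k$; I will upgrade this to the branch-correct statement that, along the orbit, $\omega_k=F_{-d_k}(\omega_{k+1})$, so that by the first fact $F_{-d_k}=F_{d_{N+2-k}}$ inverts $F_{d_k}$ at each step of the orbit.

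Given these, I would finish by proving that the finite sequence $(\omega_2,\omega_3,\dots,\omega_{N+1})$ is a palindrome, $\omega_{2+j}=\omega_{N+1-j}$ for $0\le j\le N-1$, by induction from the centre outward. The base case is the middle of the palindrome: for $N$ even it is $\omega_{N/2+1}=\omega_{N/2+2}$, which holds because $d_{N/2+1}=0$; for $N$ odd it is the trivial identity at the single central index. For the inductive step, assume $\omega_{2+j}=\omega_{N+1-j}$ with $j\ge1$; then reversibility gives $\omega_{j+1}=F_{-d_{j+1}}(\omega_{j+2})$, while $\omega_{N+2-j}=F_{d_{N+1-j}}(\omega_{N+1-j})=F_{-d_{j+1}}(\omega_{N+1-j})=F_{-d_{j+1}}(\omega_{j+2})$, using $d_{N+1-j}=-d_{j+1}$ and then the induction hypothesis; hence $\omega_{j+1}=\omega_{N+2-j}$, i.e. the palindrome identity with $j$ replaced by $j-1$. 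Taking $j=0$ yields $\omega_{N+1}=\omega_2=\omega_1$, which gives $N$-periodicity.

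The crux, and the step I expect to be the main obstacle, is justifying the branch-correct form $\omega_k=F_{-d_k}(\omega_{k+1})$ of the reversibility — i.e. checking that the root of the reversed quadratic selected by the \emph{positive} square root in \eqref{eq:omkp1-final} is the physical preimage $\omega_k$, not the spurious one. This is exactly where Assumption \ref{asm:omk-min-periodic} enters: the two roots of the reversed quadratic multiply to $Pd_k$, and since $\omega_k$ is itself a root the other equals $Pd_k/\omega_k$; if $Pd_k>0$ both roots are positive and $\omega_k^2\ge\omega_{k,\mathrm{min}}^2>|P|\ge Pd_k$ forces $Pd_k/\omega_k<\omega_k$, so $\omega_k$ is the larger root, which is precisely the positive-square-root branch; if $Pd_k<0$ exactly one root is positive, again $\omega_k$; and if $Pd_k=0$ the statement is vacuous (cf. Remark \ref{rem2.1}), the discriminant being automatically nonnegative because $\omega_k$ is a real root. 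The remaining work — the two parity cases and the telescoping bookkeeping — is routine.
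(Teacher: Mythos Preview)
Your proof is correct and follows essentially the same strategy as the paper's: both establish $\omega_2=\omega_1$ from $\sin\theta_1=0$, exploit the antisymmetry of the sine forcing about $\theta=\pi$, prove a palindrome structure on $(\omega_2,\dots,\omega_{N+1})$ by induction outward from the centre (with the even/odd split), and invoke Assumption~\ref{asm:omk-min-periodic} to rule out the spurious root. The only difference is packaging: you isolate the branch-correct reversibility $\omega_k=F_{-d_k}(\omega_{k+1})$ as a one-time lemma and then run a clean induction, whereas the paper re-derives the equivalent cancellation by adding the two symmetric recursion equations at each step and factoring $(\omega_a-\omega_b)\bigl(1-P\sin(\cdot)/(\omega_a\omega_b)\bigr)=0$.
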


\begin{proof}
It follows from \eqref{eq:thkp1-zero-dyn} and \eqref{eq:delths} that $\theta_{N+1} = \theta_1 + 2\pi = \theta_1$, which establishes periodicity of $\theta$. For any value of $N$ in \eqref{eq:delths}, we can express $\theta_k$ as follows
\begin{equation} \label{eq:thk-from-pi}
    \theta_k = (k-1) \delths = \pi + (2k - N - 2) \frac{\delths}{2}
\end{equation}

\noindent from which it follows that
\begin{equation} \label{eq:sin-from-pi}
    \sin\theta_k = - \sin\left[(2k - N - 2) \frac{\delths}{2}\right]
\end{equation}

\noindent Using \eqref{eq:sin-from-pi}, the dynamics \eqref{eq:omkp1-zero-dyn-reduced} may be rewritten as
\begin{equation} \label{eq:omkp1-zero-dyn-from-pi}
    \omega_{k+1} - \omega_k = - P \sin\left[(2k - N - 2) \frac{\delths}{2}\right] \left[ \frac{1}{\omega_k} + \frac{1}{\omega_{k+1}} \right]
\end{equation}

\noindent Substituting $k = 1$, we get
\begin{equation} \label{eq:om2-eq-om1}
    \omega_{2} = \omega_1
\end{equation}

\noindent We prove periodicity of $\omega$ separately for even and odd values of $N$.

\paragraph{Even $N$}

Substituting $k = \frac{N}{2}+1$ in \eqref{eq:thk-from-pi} and \eqref{eq:omkp1-zero-dyn-from-pi}, we get $\theta_{\frac{N}{2}+1} = \pi$ and $\omega_{\frac{N}{2}+2} - \omega_{\frac{N}{2}+1} = 0$, from which it follows that
\begin{equation} \label{eq:equal-pi}
    \omega_{\frac{N}{2}+2} = \omega_{\frac{N}{2}+1}
\end{equation}

\noindent Choosing $k = \left(\frac{N}{2}+1\right) - 1$ and $k = \left(\frac{N}{2}+1\right) + 1$ in \eqref{eq:omkp1-zero-dyn-from-pi}, we obtain
\begin{align} 
    \omega_{\frac{N}{2}+1} - \omega_{\frac{N}{2}} &= P \sin(\delths) \left[ \frac{1}{\omega_{\frac{N}{2}}} + \frac{1}{\omega_{\frac{N}{2}+1}} \right] \label{eq:even-Nby2} \\
    \omega_{\frac{N}{2}+3} - \omega_{\frac{N}{2}+2} &= - P \sin(\delths) \left[ \frac{1}{\omega_{\frac{N}{2}+2}} + \frac{1}{\omega_{\frac{N}{2}+3}} \right] \label{eq:even-Nby2p2}
\end{align}

\noindent Adding the above two equations, we obtain
\begin{equation}
    \omega_{\frac{N}{2}+3} - \omega_{\frac{N}{2}+2} + \omega_{\frac{N}{2}+1} - \omega_{\frac{N}{2}} = P \sin(\delths) \left[ \frac{1}{\omega_{\frac{N}{2}}} + \frac{1}{\omega_{\frac{N}{2}+1}} - \frac{1}{\omega_{\frac{N}{2}+2}} - \frac{1}{\omega_{\frac{N}{2}+3}} \right]
\end{equation}

\noindent Using \eqref{eq:equal-pi}, the above equation simplifies to
\begin{equation}
    \omega_{\frac{N}{2}+3} - \omega_{\frac{N}{2}} = P \sin(\delths) \left[ \frac{\omega_{\frac{N}{2}+3} - \omega_{\frac{N}{2}}}{\omega_{\frac{N}{2}} \omega_{\frac{N}{2}+3}} \right]
\end{equation}
From Assumption \ref{asm:omk-min-periodic}, it follows
\begin{equation} \label{eq:equal-delths}
    \omega_{\frac{N}{2}+3} = \omega_{\frac{N}{2}}
\end{equation}

\noindent Similarly, for $k = \left(\frac{N}{2}+1\right) - 2$ and $k = \left(\frac{N}{2}+1\right) + 2$ in \eqref{eq:omkp1-zero-dyn-from-pi}, we obtain
\begin{align} 
    \omega_{\frac{N}{2}} - \omega_{\frac{N}{2}-1} &= P \sin(2\delths) \left[ \frac{1}{\omega_{\frac{N}{2}-1}} + \frac{1}{\omega_{\frac{N}{2}}} \right] \label{eq:even-Nby2m1} \\
    \omega_{\frac{N}{2}+4} - \omega_{\frac{N}{2}+3} &= - P \sin(2\delths) \left[ \frac{1}{\omega_{\frac{N}{2}+3}} + \frac{1}{\omega_{\frac{N}{2}+4}} \right] \label{eq:even-Nby2p3}
\end{align}

\noindent Adding the above two equations, we obtain
\begin{equation}
    \omega_{\frac{N}{2}+4} - \omega_{\frac{N}{2}+3} + \omega_{\frac{N}{2}} - \omega_{\frac{N}{2}-1} = P \sin(2\delths) \left[ \frac{1}{\omega_{\frac{N}{2}-1}} + \frac{1}{\omega_{\frac{N}{2}}} - \frac{1}{\omega_{\frac{N}{2}+3}} - \frac{1}{\omega_{\frac{N}{2}+4}} \right]
\end{equation}

\noindent Using \eqref{eq:equal-delths}, the above equation simplifies to
\begin{equation}
    \omega_{\frac{N}{2}+4} - \omega_{\frac{N}{2}-1} = P \sin(2\delths) \left[ \frac{\omega_{\frac{N}{2}+4} - \omega_{\frac{N}{2}-1}}{\omega_{\frac{N}{2}-1} \omega_{\frac{N}{2}+4}} \right]
\end{equation}
From Assumption \ref{asm:omk-min-periodic}, it follows
\begin{equation} \label{eq:equal-2delths}
    \omega_{\frac{N}{2}+4} = \omega_{\frac{N}{2}-1}
\end{equation}

\noindent This process can be repeated for all pairs of values of $k = \left(\frac{N}{2}+1\right) - n$ and $k = \left(\frac{N}{2}+1\right) + n$, $n = 0, 1, 2, \dots, \left(\frac{N}{2}-1\right)$ to establish that 
\begin{equation} \label{eq:om-pairs-even}
    \omega_{\frac{N}{2}+n+2} = \omega_{\frac{N}{2}-n+1}, \quad n = 0, 1, 2, \dots, \left(\frac{N}{2}-1\right)
\end{equation}

\noindent In particular, for $n = \left(\frac{N}{2}-1\right)$, we get
\begin{equation}
    \omega_{N+1} = \omega_2
\end{equation}

\noindent Since $\omega_2 = \omega_1$ from \eqref{eq:om2-eq-om1}, we have
\begin{equation} \label{eq:om-periodicity-even}
    \omega_{N+1} = \omega_1
\end{equation}

\paragraph{Odd $N$}

Substituting $k = \frac{N+1}{2}$ and $k = \frac{N+3}{2}$ in \eqref{eq:omkp1-zero-dyn-from-pi}, we obtain
\begin{align} 
    \omega_{\frac{N+3}{2}} - \omega_{\frac{N+1}{2}} &= P \sin\left(\frac{\delths}{2}\right) \left[ \frac{1}{\omega_{\frac{N+1}{2}}} + \frac{1}{\omega_{\frac{N+3}{2}}} \right] \label{eq:odd-Np1by2} \\
    \omega_{\frac{N+5}{2}} - \omega_{\frac{N+3}{2}} &= - P \sin\left(\frac{\delths}{2}\right) \left[ \frac{1}{\omega_{\frac{N+3}{2}}} + \frac{1}{\omega_{\frac{N+5}{2}}} \right] \label{eq:odd-Np3by2}
\end{align}

\noindent Adding the above two equations, we obtain
\begin{equation}
    \omega_{\frac{N+5}{2}} - \omega_{\frac{N+1}{2}} = P \sin\left(\frac{\delths}{2}\right) \left[ \frac{1}{\omega_{\frac{N+1}{2}}} - \frac{1}{\omega_{\frac{N+5}{2}}} \right]
\end{equation}
which simplifies to
\begin{equation}
    \omega_{\frac{N+5}{2}} - \omega_{\frac{N+1}{2}} = P \sin\left(\frac{\delths}{2}\right) \left[ \frac{\omega_{\frac{N+5}{2}} - \omega_{\frac{N+1}{2}}}{\omega_{\frac{N+1}{2}}\omega_{\frac{N+5}{2}}} \right]
\end{equation}
From Assumption \ref{asm:omk-min-periodic}, it follows
\begin{equation} \label{eq:equal-delthsby2}
    \omega_{\frac{N+5}{2}} = \omega_{\frac{N+1}{2}}
\end{equation}

\noindent Similarly, choosing $k = \frac{N+1}{2}-1$ and $k = \frac{N+3}{2}+1$ in \eqref{eq:omkp1-zero-dyn-from-pi}, we obtain
\begin{align} 
    \omega_{\frac{N+1}{2}} - \omega_{\frac{N-1}{2}} &= P \sin\left(\frac{3\delths}{2}\right) \left[ \frac{1}{\omega_{\frac{N-1}{2}}} + \frac{1}{\omega_{\frac{N+1}{2}}} \right] \label{eq:odd-Nm1by2} \\
    \omega_{\frac{N+7}{2}} - \omega_{\frac{N+5}{2}} &= - P \sin\left(\frac{3\delths}{2}\right) \left[ \frac{1}{\omega_{\frac{N+5}{2}}} + \frac{1}{\omega_{\frac{N+7}{2}}} \right] \label{eq:odd-Np5by2}
\end{align}

\noindent Adding the above two equations, we obtain
\begin{equation}
    \omega_{\frac{N+7}{2}} - \omega_{\frac{N+5}{2}} + \omega_{\frac{N+1}{2}} - \omega_{\frac{N-1}{2}} = P \sin\left(\frac{3\delths}{2}\right) \left[ \frac{1}{\omega_{\frac{N-1}{2}}} + \frac{1}{\omega_{\frac{N+1}{2}}} - \frac{1}{\omega_{\frac{N+5}{2}}} - \frac{1}{\omega_{\frac{N+7}{2}}} \right]
\end{equation}

\noindent Substituting \eqref{eq:equal-delthsby2} in the above equation and simplifying, we obtain
\begin{equation}
    \omega_{\frac{N+7}{2}} - \omega_{\frac{N-1}{2}} = P \sin\left(\frac{3\delths}{2}\right) \left[ \frac{\omega_{\frac{N+7}{2}} - \omega_{\frac{N-1}{2}}}{\omega_{\frac{N-1}{2}}\omega_{\frac{N+7}{2}}} \right]
\end{equation}
Using Assumption \ref{asm:omk-min-periodic}, we get
\begin{equation} \label{eq:equal-3delthsby2}
    \omega_{\frac{N+7}{2}} = \omega_{\frac{N-1}{2}}
\end{equation}

\noindent This process can be repeated for all pairs of values of $k = \frac{N+1}{2} - n$, $k = \frac{N+3}{2} + n$, $n = 0, 1, 2, \dots, \left(\frac{N-3}{2}\right)$ to establish that 
\begin{equation}  \label{eq:om-pairs-odd}
    \omega_{\frac{N+3}{2}+n+1} = \omega_{\frac{N+1}{2}-n}, \quad n = 0, 1, 2, \dots, \left(\frac{N-3}{2}\right)
\end{equation}

\noindent In particular, if $n = \left(\frac{N-3}{2}\right)$, the above equation gives
\begin{equation}
    \omega_{N+1} = \omega_2
\end{equation}

\noindent Since $\omega_2 = \omega_1$ from \eqref{eq:om2-eq-om1}, we have
\begin{equation} \label{eq:om-periodicity-odd}
    \omega_{N+1} = \omega_1
\end{equation}

\noindent This concludes the proof.
\end{proof}

\begin{lemma} \label{lem:delths}
The dynamical system in \eqref{eq:thkp1-zero-dyn} and \eqref{eq:omkp1-zero-dyn-reduced} is periodic with period $N$ for $\theta_1 = \delths/2$ and initial conditions satisfying Assumption \ref{asm:omk-min-periodic} when $\delths$ is given by \eqref{eq:delths}.
\end{lemma}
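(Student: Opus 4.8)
The plan is to mirror the structure of the proof of Lemma~\ref{lem:zero}, but shifted so that the single reflection axis at $\theta = \pi$ governs the entire index range $k = 1, \dots, N+1$. First I would record the two facts that drive everything. From \eqref{eq:thkp1-zero-dyn} with $\theta_1 = \delths/2$ and $\delths = 2\pi/N$ one has $\theta_{N+1} = \theta_1 + 2\pi = \theta_1$, while $\theta_k = (2k-1)\delths/2 = \pi + (2k-1-N)\delths/2$, hence $\sin\theta_k = -\sin[(2k-1-N)\delths/2]$. The crucial consequence is that $\theta_k$ and $\theta_{k'}$ are reflections of one another about $\pi$ whenever $k + k' = N+1$, and for such a pair the sine coefficients in \eqref{eq:omkp1-zero-dyn-reduced} are equal and opposite. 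Note that, unlike in Lemma~\ref{lem:zero}, $\theta_1 = \delths/2 \notin \{0,\pi\}$, so there is no immediate relation $\omega_2 = \omega_1$; instead the reflection about $\pi$ will reach the endpoints $k = 1$ and $k = N+1$ directly, so no auxiliary relation is needed.

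For odd $N$, set $j = (N+1)/2 \in Z^+$; then $\theta_j = \pi$, so \eqref{eq:omkp1-zero-dyn-reduced} gives $\omega_{j+1} = \omega_j$. I would then prove by induction on $n = 0, 1, \dots, (N-1)/2$ that $\omega_{j+n+1} = \omega_{j-n}$. The inductive step writes \eqref{eq:omkp1-zero-dyn-reduced} at $k = j-n$ and at $k = j+n$ (whose $\theta$-values are $\pi \mp n\delths$, giving coefficients $\pm P\sin(n\delths)$), adds the two equations, uses the hypothesis $\omega_{j-n+1} = \omega_{j+n}$ from the previous step to cancel one difference on the left and one reciprocal pair on the right, leaving $\omega_{j+n+1} - \omega_{j-n} = P\sin(n\delths)(\omega_{j+n+1}-\omega_{j-n})/(\omega_{j-n}\omega_{j+n+1})$, and then invokes Assumption~\ref{asm:omk-min-periodic} (which forces $|\omega_{j-n}\omega_{j+n+1}| \ge \omega_{k,\mathrm{min}}^2 > |P| \ge |P\sin(n\delths)|$) to conclude $\omega_{j+n+1} = \omega_{j-n}$. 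Taking $n = (N-1)/2$ yields $\omega_{N+1} = \omega_1$, which with $\theta_{N+1} = \theta_1$ gives period $N$.

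For even $N$ there is no integer index with $\theta_k = \pi$; the axis sits midway between $k = N/2$ and $k = N/2+1$. I would instead prove by induction on $n = 0, 1, \dots, N/2-1$ that $\omega_{N/2+n+2} = \omega_{N/2-n}$. The step is identical in spirit: write \eqref{eq:omkp1-zero-dyn-reduced} at $k = N/2 - n$ and $k = N/2 + 1 + n$ (with $\theta$-values $\pi \mp (2n+1)\delths/2$, hence coefficients $\pm P\sin((2n+1)\delths/2)$), add, use the hypothesis $\omega_{N/2-n+1} = \omega_{N/2+n+1}$ (which is trivially true when $n = 0$) to telescope, and apply Assumption~\ref{asm:omk-min-periodic}. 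Taking $n = N/2 - 1$ gives $\omega_{N+1} = \omega_1$, completing the proof; one may alternatively phrase the two cases uniformly by saying the roles of even and odd $N$ in Lemma~\ref{lem:zero} are interchanged.

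The only real content beyond bookkeeping is the index arithmetic: verifying that for each $N$ the reflected pairs $k = j \mp n$ (odd $N$) or $k = N/2 - n,\ N/2+1+n$ (even $N$) have $\theta$-values exactly symmetric about $\pi$ for every admissible $n$, that the final value of $n$ produces precisely the equality $\omega_{N+1} = \omega_1$, and that the induction hypothesis used at step $n$ coincides exactly with the equality produced at step $n-1$, so the telescoping cancellation goes through cleanly. Everything else reduces to the same cancellation (followed by the Assumption~\ref{asm:omk-min-periodic} argument) used repeatedly in Lemma~\ref{lem:zero}; in particular the signs of $\sin(n\delths)$ and $\sin((2n+1)\delths/2)$ are immaterial, since the concluding step needs only $|\sin(\cdot)| \le 1$.
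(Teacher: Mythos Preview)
Your proposal is correct and follows essentially the same argument as the paper's proof in Appendix~\ref{app:proof}: the same expression $\theta_k = \pi + (2k-N-1)\delths/2$, the same pairing of indices symmetric about $\pi$, the same add-and-cancel step followed by Assumption~\ref{asm:omk-min-periodic}, and the same final identification $\omega_{N+1}=\omega_1$. Your observation that the even/odd roles are swapped relative to Lemma~\ref{lem:zero} is exactly what the paper does, and your casting of the repeated step as a clean induction on $n$ is, if anything, slightly tidier than the paper's ``this process can be repeated'' phrasing.
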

\begin{proof}
The proof is similar to the one for Lemma \ref{lem:zero}, and is provided in Appendix \ref{app:proof}
\end{proof}

\begin{theorem} \label{thm:periodic}
The dynamical system in \eqref{eq:thkp1-zero-dyn} and \eqref{eq:omkp1-zero-dyn-reduced} is periodic with period $N$ for $\theta_1 \in \{0, \delths/2, \delths, \dots, 2\pi-\delths/2\}$ and initial conditions satisfying Assumption \ref{asm:omk-min-periodic} when $\delths$ is given by \eqref{eq:delths}.
\end{theorem}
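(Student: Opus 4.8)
The plan is to bootstrap from Lemmas \ref{lem:zero} and \ref{lem:delths} using the fact that the system is autonomous. First I would observe that the admissible set of initial angles is $\{\, m\,\delths/2 : m = 0,1,\dots,2N-1 \,\}$, which splits according to the parity of $m$: even $m$ gives $\theta_1 = j\delths$ for some $j\in\{0,1,\dots,N-1\}$, and odd $m$ gives $\theta_1 = \delths/2 + j\delths$ for some such $j$. The cases $j=0$ are exactly Lemmas \ref{lem:zero} and \ref{lem:delths}, so it suffices to treat $j\in\{1,\dots,N-1\}$ in each of these two families, and the two families are handled in the same way, with Lemma \ref{lem:delths} playing the role of Lemma \ref{lem:zero}. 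I describe the first family.

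Fix $j\in\{1,\dots,N-1\}$ and an initial condition $(\theta_1,\omega_1)$ with $\theta_1=j\delths$ satisfying Assumption \ref{asm:omk-min-periodic}. Since $\delths = 2\pi/N$ we have $\theta_k = (k-1+j)\delths$, so the least index $k_0$ for which $(k_0-1+j)\delths$ is an integer multiple of $2\pi$ is $k_0 = N+1-j \in \{2,\dots,N\}$, and there $\theta_{k_0}=0$. The tail $(\theta_k,\omega_k)_{k\ge k_0}$ is then exactly the forward orbit of the initial condition $(0,\omega_{k_0})$ under \eqref{eq:thkp1-zero-dyn} and \eqref{eq:omkp1-final}; moreover $\min_{k\ge k_0}\omega_k \ge \min_{k\ge 1}\omega_k = \omega_{k,\mathrm{min}}$ because the minimum is taken over a subset, so $(0,\omega_{k_0})$ again satisfies Assumption \ref{asm:omk-min-periodic}. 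Hence Lemma \ref{lem:zero} applies to the tail and gives $\omega_{k+N}=\omega_k$ for every $k\ge k_0$.

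It remains to push this identity back through the finitely many indices $k=k_0-1,k_0-2,\dots,1$, which I would do by downward induction using the same cancellation device that recurs in the proof of Lemma \ref{lem:zero}. Assuming $\omega_{k+1+N}=\omega_{k+1}$ is already known, write \eqref{eq:omkp1-zero-dyn-reduced} at index $k$ and at index $k+N$ (these carry the same value of $\sin\theta_k$ because $\theta_{k+N}=\theta_k$) and subtract; using the inductive hypothesis the terms involving index $k+1$ cancel, leaving
\[
    (\omega_{k+N}-\omega_k)\left(1 - \frac{P\sin\theta_k}{\omega_k\,\omega_{k+N}}\right) = 0 .
\]
Both $\omega_k$ and $\omega_{k+N}$ lie on the forward orbit of $(\theta_1,\omega_1)$, hence are at least $\omega_{k,\mathrm{min}}$, so $|P\sin\theta_k|\le |P| < \omega_{k,\mathrm{min}}^2 \le \omega_k\,\omega_{k+N}$ by Assumption \ref{asm:omk-min-periodic}; the second factor is nonzero and therefore $\omega_{k+N}=\omega_k$. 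The base case $k+1=k_0$ is supplied by the previous paragraph, so running the induction down to $k=1$ yields $\omega_{k+N}=\omega_k$ for all $k\ge 1$, and together with $\theta_{k+N}=\theta_k$ this is periodicity with period $N$. The second family is word-for-word the same, with $k_0$ the least index at which $\theta_{k_0}=\delths/2$ and Lemma \ref{lem:delths} in place of Lemma \ref{lem:zero}.

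I expect the remaining issues to be administrative rather than conceptual: verifying that the forward orbit of $(\theta_1,\omega_1)$ is well defined for every $k$ — so that $\omega_{k_0}$ exists and \eqref{eq:omkp1-final} stays real and positive — which should follow from Remark \ref{rem2.1}, Assumption \ref{asm:omk-min-periodic}, and the positive-square-root convention, and keeping track of $k_0$ and the subset inequality for $\omega_{k,\mathrm{min}}$ correctly in each parity family. If one wished to avoid invoking the lemmas as black boxes, the alternative is to repeat the symmetric pairing and telescoping argument of Lemma \ref{lem:zero} in place, moving the centre of anti-symmetry from $k=N/2+1$ to $k=N/2+1-j$ when $N$ is even (with the analogous adjacent pair when $N$ is odd) and taking the closing identity $\omega_{k_0+1}=\omega_{k_0}$ at $k_0=N+1-j$ rather than at $k=1$; this needs $N\ge 3$ exactly as before, but requires heavier index bookkeeping, which is why the reduction above seems preferable.
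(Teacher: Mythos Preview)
Your proposal is correct. The underlying idea --- reduce to Lemmas~\ref{lem:zero} and~\ref{lem:delths} by shifting along the orbit to the first index where $\theta\in\{0,\delths/2\}$ --- is the same as the paper's, but the mechanics differ. The paper's proof is a one-liner: it \emph{decrements} $k$ (runs the map backward) until $\theta$ lands in $\{0,\delths/2\}$, then cites the lemmas for the equivalent initial condition. You instead run \emph{forward} to $k_0$, apply the relevant lemma to the tail, and then propagate $\omega_{k+N}=\omega_k$ back to $k=1$ by a downward induction based on the same cancellation identity used inside Lemma~\ref{lem:zero}. What your route buys is that every $\omega$ you touch lies on the forward orbit of $(\theta_1,\omega_1)$, so Assumption~\ref{asm:omk-min-periodic} applies directly at every step; the paper's backward shift tacitly assumes the inverse iterates remain in the regime $\omega^2>|P|$ (and that the correct root is selected when inverting \eqref{eq:omkp1-zero-dyn-reduced}), which is true a posteriori once periodicity is known but is not argued. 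The cost is a short extra induction; either way the argument is essentially the autonomy of the system plus the two lemmas.
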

\begin{proof}
The initial conditions $\theta_1 \in \{0, \delths/2\}$ are covered by Lemmas \ref{lem:zero} and \ref{lem:delths}. For the other initial conditions $\theta_1 \in \{\delths, 3\delths/2, \dots, 2\pi-\delths/2\}$, equivalent initial conditions with $\theta_1 \in \{0, \delths/2\}$ can be found by decrementing $k$ while solving \eqref{eq:thkp1-zero-dyn} and \eqref{eq:omkp1-zero-dyn-reduced}.
\end{proof}

\begin{corollary} \label{cor:stable}
    The periodic orbits described in Theorem \ref{thm:periodic} are stable but not attractive.
\end{corollary}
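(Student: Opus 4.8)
Write $z=(\theta,\omega)$ and let $F$ denote the map given by \eqref{eq:thkp1-zero-dyn} and \eqref{eq:omkp1-final}, so that an orbit $\gamma=\{z_1^\ast,\dots,z_N^\ast\}$ produced by Theorem~\ref{thm:periodic} is a set of fixed points of $F^N$. The plan rests on two structural facts. \emph{(a)} $F$ is triangular --- $\theta_k=\theta_1+(k-1)\delths$ independently of $\omega$ --- so a perturbation of the initial angle is \emph{frozen}: it reappears unchanged in every iterate and never grows, and $F^N$ maps each vertical line $\{\theta=\theta_1^\ast\}$ to itself since $\theta_1^\ast+N\delths=\theta_1^\ast$. \emph{(b)} By Theorem~\ref{thm:periodic}, every $(\theta_1^\ast,\omega_1)$ with $\theta_1^\ast$ on the grid and $\omega_1$ satisfying Assumption~\ref{asm:omk-min-periodic} has $\omega_{N+1}=\omega_1$; since $\omega_{k,\mathrm{min}}^2>|P|$ is an open condition and $\omega_2,\dots,\omega_N$ depend continuously on $\omega_1$ through \eqref{eq:omkp1-final}, the one-dimensional return map $\omega_1\mapsto\omega_{N+1}$ along $\{\theta=\theta_1^\ast\}$ coincides with the identity on a neighbourhood of $\omega_1^\ast$.

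\emph{Not attractive.} For $\eta\neq0$ small, fact \emph{(b)} shows the orbit $\gamma_\eta$ through $(\theta_1^\ast,\omega_1^\ast+\eta)$ is again a period-$N$ orbit, and $\gamma_\eta\to\gamma$ as $\eta\to0$; thus $\gamma$ lies in a one-parameter family $\{\gamma_\eta\}$ of period-$N$ orbits. Because the $N$ angles of $\gamma$ are distinct and $\delths$ apart, for small $|\eta|$ the point $(\theta_1^\ast,\omega_1^\ast+\eta)$ is at distance exactly $|\eta|$ from $\gamma$, a distance that recurs for all time along the periodic orbit $\gamma_\eta$; hence no trajectory on $\gamma_\eta$ approaches $\gamma$, and $\gamma$ is not asymptotically stable.

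\emph{Stable.} Split a small perturbation of $z_1^\ast$ into an $\omega_1$-part and a $\theta_1$-part. The $\omega_1$-part is immediate from \emph{(b)}: the perturbed point still satisfies Assumption~\ref{asm:omk-min-periodic}, hence generates a period-$N$ orbit whose iterates are within $\varepsilon$ of those of $\gamma$ by continuity in \eqref{eq:omkp1-final}, and which then repeats, so it stays $\varepsilon$-close forever. The real work is a perturbation $\Delta\theta_1$ off the grid, where periodicity is lost. Here I would combine (i) reversibility --- one checks $F\circ R=R\circ F^{-1}$ for the involution $R(\theta,\omega)=(\delths-\theta,\omega)$, and that each $\gamma$ is a symmetric orbit, meeting $\mathrm{Fix}(R)$ or $\mathrm{Fix}(R\circ F)$; this is the same reflection symmetry of the $\omega$-sequence that drives the proofs of Lemmas~\ref{lem:zero} and \ref{lem:delths} --- with (ii) the approximately invariant quantity $H(\theta,\omega)$ obtained later in the paper from the pendulum's integral of motion. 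One then shows $H$ is exactly constant on $\gamma$ (its increment over a full period telescopes to zero on a periodic orbit); that on the region $\omega^2>|P|$ the level sets $\{H=c\}$ are graphs $\omega=\varphi_c(\theta)$ over $S^1$ depending continuously on $c$; and that a trajectory started $\delta$-close to $\gamma$ keeps its $H$-value $\delta$-close, so it remains trapped between the graphs of two nearby level sets $\{H=c'\}$ with $|c'-c|$ small, hence within $\varepsilon$ of $\gamma$ in $\omega$, while staying within $|\Delta\theta_1|$ of $\gamma$ in $\theta$ by \emph{(a)}.

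\emph{Main obstacle.} By \emph{(a)} and \emph{(b)}, $DF^N$ at a point of $\gamma$ fixes the $\omega$-fibre direction and acts on it by a shear from the $\theta$-direction of magnitude $s=\partial\omega_{N+1}/\partial\theta_1$, whose iterates displace by $ms$ and therefore grow linearly unless $s=0$. So the crux is to show that this secular, $\theta$-driven drift of $\omega$ either vanishes to leading order --- by the reflection symmetry of $\gamma$ --- or is absorbed by the level curves of $H$, and to sharpen ``$H$ is approximately invariant'' into a bound uniform in $k$; this is precisely the difficulty that forces the paper to establish periodicity only on the grid and to treat off-grid initial angles numerically. A cleaner but harder route would be to prove that $F$ preserves a smooth measure, or is conjugate to an exact twist map, which would make the trapping argument rigorous with no drift estimate at all.
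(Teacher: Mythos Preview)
Your argument for ``not attractive'' and for stability under $\omega$-perturbations is exactly the paper's proof, and that is \emph{all} the paper proves. The paper's argument is three sentences: perturb a periodic point $(\theta_k,\omega_k)$ to $(\theta_k,\omega_k+\epsilon_k)$ --- with $\theta_k$ unchanged --- pull back along the map to an equivalent initial condition $(\theta_1,\omega_1+\epsilon_1)$, and invoke Theorem~\ref{thm:periodic} to conclude the perturbed trajectory is itself a distinct period-$N$ orbit, hence stays close to $\gamma$ but never converges to it. No perturbation in $\theta$ off the grid is considered; your structural facts \emph{(a)} and \emph{(b)} already contain the entire content of the paper's proof.

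The remainder of your proposal --- handling $\Delta\theta_1$ off the grid via reversibility and the approximate invariant $\bar E$ --- is an attempt to prove strictly more than the paper claims, and the paper's own numerics indicate it cannot succeed. Table~\ref{tab:integer} reports, for $N=3$, $\theta_1=0.1$, $\omega_1=12$, a $44.9\%$ drift in $\omega_k$ over $1200$ steps; for $N=4,6$ with small $\omega_1$ the orbit leaves the feasible region entirely in a few hundred steps. So the shear $s=\partial\omega_{N+1}/\partial\theta_1$ you flag as the ``main obstacle'' is genuinely nonzero, the linear secular growth $ms$ is real, and the orbits are \emph{not} Lyapunov stable in the full two-dimensional sense. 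The quantity $\bar E$ cannot trap orbits uniformly in $k$ --- the paper explicitly calls it only approximately invariant and quantifies the per-step error --- and reversibility of a map does not by itself force stability of its symmetric periodic orbits. Read the corollary as a statement about perturbations along the $\omega$-fibre (equivalently, within the invariant set of grid angles); the harder route you sketch is chasing a claim that the paper's simulations refute.
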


\begin{proof}
    The proof follows directly from Theorem \ref{thm:periodic}. Let the periodic solution be given by the points $(\theta_1, \omega_1)$, $(\theta_2, \omega_2)$, \dots, $(\theta_N, \omega_N)$. If any periodic point $(\theta_k, \omega_k)$ is perturbed to $(\theta_k, \omega_k + \epsilon_k)$, the equivalent initial condition $(\theta_1, \omega_1 + \epsilon_1)$ can be found by decrementing $k$ while solving \eqref{eq:thkp1-zero-dyn} and \eqref{eq:omkp1-zero-dyn-reduced}. Since the new initial condition satisfies Assumption \ref{asm:omk-min-periodic}, it results in a distinct periodic orbit with $(\theta_{N+1}, \omega_{N+1}) = (\theta_1, \omega_1 + \epsilon_1)$.
\end{proof}
 
It follows from Corollary \ref{cor:stable} that the eigenvalues of
\begin{equation} \label{eq:floquet-matrix}
    M = J_N \cdots J_2\, J_1, \qquad
    J_k(\theta_k, \omega_k) \triangleq \begin{bmatrix}
        \dfrac{\partial\theta_{k+1}}{\partial\theta_k} & \dfrac{\partial\theta_{k+1}}{\partial\omega_k} \\
        \dfrac{\partial\omega_{k+1}}{\partial\theta_k} & \dfrac{\partial\omega_{k+1}}{\partial\omega_k}
    \end{bmatrix}
\end{equation}

\noindent have magnitude equal to 1. This will be verified numerically in simulations.

\section{Approximate Invariant for Dynamical System}

In the previous section, periodicity of the system was established for the parameter choice \eqref{eq:delths} under specific initial conditions. We now seek to describe the behavior of the system for arbitrary values of $\delths$ and arbitrary initial conditions by approximating $\omega_k$ as an explicit periodic function of $\theta_k$. 

\subsection{Limiting case of the dynamical system}

We consider the dynamical system \eqref{eq:thkp1-zero-dyn} and \eqref{eq:omkp1-zero-dyn-reduced} in the limit $\delths \rightarrow 0$. First, \eqref{eq:thkp1-zero-dyn} may be rewritten as
\begin{equation}
    \frac{\theta_{k+1} - \theta_k}{\delk} = \frac{\delths}{\delk}
\end{equation}

\noindent where $\delk$ is the time taken for the states to evolve from $(\theta_k, \omega_k)$ to $(\theta_{k+1}, \omega_{k+1})$. As $\delths \rightarrow 0$, $\delk \rightarrow 0$, and the above equation becomes
\begin{equation} \label{eq:th-lim}
    \dot \theta = \omega
\end{equation}

\noindent where $\theta$ and $\omega$ are the continuous-time analogs of the discrete variables $\theta_k$ and $\omega_k$. Dividing both sides of \eqref{eq:omkp1-zero-dyn-reduced} by $\delk$ and regrouping terms, we obtain
\begin{equation} \label{diffeqnomega}
    \frac{\omega_{k+1} - \omega_k}{\delk} = \pm \frac{g}{\ell}\, \frac{\delths}{\sin(\delths)} \frac{\delths}{\delk} \frac{1}{2} \left[ \frac{1}{\omega_k} + \frac{1}{\omega_{k+1}} \right] \sin \theta_k
\end{equation}

\noindent As $\delths \rightarrow 0$ and $\delk \rightarrow 0$, we have
\begin{equation*}
\lim_{\delths \rightarrow 0}\, \frac{\delths}{\sin(\delths)} = 1, \quad \left[ \frac{1}{\omega_k} + \frac{1}{\omega_{k+1}}\right] = \dfrac{2}{\omega}
\end{equation*}

\noindent Using the above expressions in \eqref{diffeqnomega}, we get
\begin{equation} \label{eq:simple-pendulum}
    \dot \omega = \pm \frac{g}{\ell} \sin \theta, \quad \Rightarrow \quad \ddot \theta = \pm \frac{g}{\ell} \sin \theta
\end{equation}

\noindent If the negative choice of $P$ in \eqref{eq:P} is considered, the above equation that of a simple pendulum of length $\ell$. Equation \eqref{eq:simple-pendulum} has an integral of motion given by \cite{perram_explicit_2003-1, shiriaev_constructive_2005}
\begin{equation} \label{eq:integral-of-motion-pendulum}
    E = \frac{1}{2}\dot\theta^2 \pm \frac{g}{\ell} \cos\theta 
\end{equation}

\noindent where the +ve (-ve) sign in the above equation corresponds to the +ve (-ve) sign in \eqref{eq:simple-pendulum}. With the value of $E$ computed from initial conditions, the above equation can be used to express $\omega$ as a periodic function of $\theta$
\begin{equation} \label{eq:om-periodic-pendulum}
    \omega = \sqrt{2\left[ E \mp \frac{g}{\ell} \cos\theta \right]}
\end{equation}

\noindent which provides solutions for $\omega \in R^+$ as long as the initial conditions ensure $E > g/\ell$.

\subsection{Approximation of $\omega_k$ as an explicit periodic function of $\theta_k$}

Motivated by the discussion in the preceding section, we seek a quantity similar to \eqref{eq:integral-of-motion-pendulum} for the discrete dynamical system described by \eqref{eq:thkp1-zero-dyn} and \eqref{eq:omkp1-final}, that is \emph{approximately} invariant along system trajectories. We assume the following form for this quantity:
\begin{equation} \label{eq:E-discrete}
    \bar E = \frac{1}{2}\omega_k^2 + \sigma \frac{g}{\ell} \cos\left( \theta_k - \delthsbytwo \right)
\end{equation}

\noindent which can be rewritten to express $\omega_k$ as
\begin{equation} \label{eq:om-periodic-discrete}
    \omega_k = \sqrt{2\left[ \bar E - \sigma \frac{g}{\ell} \cos\left( \theta_k - \delthsbytwo \right) \right]}
\end{equation}

\noindent In the above equations, $\sigma$ is a parameter to be determined; it is +ve (-ve) when $P$ in \eqref{eq:P} is +ve (-ve). It must be ensured that $\lim_{\delths \rightarrow 0} |\sigma| = 1$ so that, in the limit $\delths \rightarrow 0$, \eqref{eq:E-discrete} and \eqref{eq:om-periodic-discrete} reduce to the exact relations \eqref{eq:integral-of-motion-pendulum} and \eqref{eq:om-periodic-pendulum}. The argument $\theta_k - \delths/2$ of the cosine function in \eqref{eq:om-periodic-discrete} ensures $\omega_{k+1} = \omega_k$ when $\theta_k \in \{0, \pi\}$ - refer to the discussion after Remark \ref{rem2.1}. Further, if \eqref{eq:thk-from-pi} holds (this corresponds to the initial condition $\theta_1 = 0$), \eqref{eq:om-periodic-discrete} can be rewritten as:
\begin{equation*} 
    \omega_k = \sqrt{2\left[ \bar E + \sigma \frac{g}{\ell} \cos\left\{ (2k - N - 3)\delthsbytwo \right\} \right]}
\end{equation*}

\noindent which implies \eqref{eq:om-pairs-even} or \eqref{eq:om-pairs-odd} depending on whether $N$ is even or odd. If \eqref{eq:thk-from-pi-delths} holds (this corresponds to the initial condition $\theta_1 = \delths/2$), \eqref{eq:om-periodic-discrete} can be rewritten as:
\begin{equation*} 
    \omega_k = \sqrt{2\left[ \bar E + \sigma \frac{g}{\ell} \cos\left\{ (2k - N - 2)\delthsbytwo \right\} \right]}
\end{equation*}

\noindent which implies \eqref{eq:om-pairs-even-delths} or \eqref{eq:om-pairs-odd-delths}, depending on whether $N$ is even or odd.

Given a value of $\bar E$ in \eqref{eq:om-periodic-discrete}, the minima (maxima) of $\omega_k$ occur at $\theta_k = \delths/2$ and the maxima (minima) at $\theta_k = \pi + \delths/2$ when $\sigma$ is +ve (-ve).

\subsection{Comparison with exact solution}

The value of $\omega_{k+1}$ predicted by \eqref{eq:om-periodic-discrete} is given by
\begin{equation}
    \omega_{k+1} = \sqrt{2\left[ \bar E - \sigma \frac{g}{\ell} \cos\left( \theta_{k+1} - \delthsbytwo \right) \right]} = \sqrt{2\left[ \bar E - \sigma \frac{g}{\ell} \cos\left( \theta_k + \delthsbytwo \right) \right]}
\end{equation}

\noindent where \eqref{eq:thkp1-zero-dyn} was used to replace $\theta_{k+1}$ in terms of $\theta_k$. Substituting the expression for $\bar E$ from \eqref{eq:E-discrete}, we obtain
\begin{equation}
\begin{split}
    \omega_{k+1} &= \sqrt{\omega_k^2 - \sigma \frac{2 g}{\ell} \left[ \cos\left( \theta_k + \delthsbytwo \right) - \cos\left( \theta_k - \delthsbytwo \right) \right]} \\
    &= \sqrt{\omega_k^2 + \sigma \frac{4 g}{\ell} \sin\left( \delthsbytwo \right) \sin\theta_k}
    \end{split}
\end{equation}

\noindent If the value of $\sigma$ is chosen\footnote{This value is found by trial and error to minimize the error between the series expansions \eqref{eq:omkp1-from-E-taylor} and \eqref{eq:omkp1-final-taylor}. In particular, this choice makes the terms of $\order{\frac{P \sin\theta_k}{\omega_k}}$ and $\order{\frac{P^2 \sin^2\theta_k}{\omega_k^3}}$ identical in the two series.} to be
\begin{equation} \label{eq:L}
    \sigma = \pm \frac{\delths{^2}}{2 \sin(\delths/2) \sin(\delths)}
\end{equation}
\noindent which indeed satisfies $\lim_{\delths \rightarrow 0} |\sigma| = 1$, the above equation can be rewritten as
\begin{equation} \label{eq:omkp1-from-E}
    \omega_{k+1} = \omega_k \sqrt{1 + \frac{4 P \sin\theta_k}{\omega_k^2}}
\end{equation}

\noindent Under the assumption that $\omega_k$ is sufficiently large such that $|4 P \sin\theta_k/\omega_k^2| < 1$, the square root in the above equation can be expanded using the convergent Taylor series to give
\begin{equation} \label{eq:omkp1-from-E-taylor}
    \omega_{k+1} = \omega_k + \frac{2 P \sin\theta_k}{\omega_k} - \frac{2 P^2 \sin^2\theta_k}{\omega_k^3} + \frac{4 P^3 \sin^3\theta_k}{\omega_k^5} - \frac{10 P^4 \sin^4\theta_k}{\omega_k^7} + \order{\frac{P^5 \sin^5\theta_k}{\omega_k^{9}}}
\end{equation}

\noindent The values of $\omega_k$ obtained from \eqref{eq:om-periodic-discrete} differ from the actual values of $\omega_k$ by an error $\epsilon_k$ for $k \geq 2$, as they are obtained assuming $\bar E$ to be invariant along system trajectories. Since $\bar E$ is obtained from initial conditions, $\epsilon_1 = 0$. Given $\epsilon_k$, the error $\epsilon_{k+1}$ is approximated by taking the difference between the series expansions \eqref{eq:omkp1-from-E-taylor} and \eqref{eq:omkp1-final-taylor}:
\begin{equation}
\begin{split}
        \epsilon_{k+1} = \epsilon_k &+ 2 P \sin\theta_k \left[ \frac{1}{\omega_k + \epsilon_k} - \frac{1}{\omega_k} \right] - 2 P^2 \sin^2\theta_k \left[ \frac{1}{(\omega_k + \epsilon_k)^3} - \frac{1}{\omega_k^3} \right] \\
        &+ P^3 \sin^3\theta_k \left[ \frac{4}{(\omega_k + \epsilon_k)^5} - \frac{6}{\omega_k^5} \right] - P^4 \sin^4\theta_k \left[ \frac{10}{(\omega_k + \epsilon_k)^7} - \frac{22}{\omega_k^7} \right] + \order{\frac{P^5 \sin^5\theta_k}{\omega_k^{9}}}
\end{split}
\end{equation}

\noindent The above series is convergent since it is obtained as a difference between two convergent series. Clearly, the larger the value of $\omega_k$, the smaller the incremental error.

\section{Simulation Results}

This section tabulates simulation results based on \eqref{eq:thkp1-zero-dyn} and \eqref{eq:omkp1-final} to illustrate the behavior of the dynamical system. We use the following parameters:
\begin{equation*}
    g = 9.81, \quad \ell = 1
\end{equation*}
\begin{table}[b!] \label{tab:integer}
\centering
\caption{Simulation results for $\delths = 2\pi/N$}
\scalebox{0.91}{
\begin{tabular}{|c|c|c|c|c|c|c|c|}
\hline
$N$                & $\theta_1$                   & $\omega_1$ & Steps                   & Notes                     & \begin{tabular}[c]{@{}c@{}}Maximum $\%$ error \\[-0.5ex] between exact $\omega_k$\\[-0.5ex] and value from \eqref{eq:om-periodic-discrete}\end{tabular} & \begin{tabular}[c]{@{}c@{}}$\%$ drift in $\omega_k$\\[-0.5ex] from $\omega_1$\end{tabular} & Illustration               \\ \hline\hline
\multirow{6}{*}{3} & \multirow{2}{*}{$0$}         & $12$       & \multirow{6}{*}{$1200$} & \multirow{4}{*}{Periodic} & $5.2507$                                                                                                                                & \multirow{4}{*}{N/A}                                                               & Fig.\ref{Fig:integer}(a) \\ \cline{3-3} \cline{6-6} \cline{8-8} 
                   &                              & $30$       &                         &                           & $3.3435 \times 10^{-3}$                                                                                                                 &                                                                                    &                      \\ \cline{2-3} \cline{6-6} \cline{8-8} 
                   & \multirow{2}{*}{$\delths/2$} & $12$       &                         &                           & $5.2507$                                                                                                                                &                                                                                    &                      \\ \cline{3-3} \cline{6-6} \cline{8-8} 
                   &                              & $30$       &                         &                           & $3.3435 \times 10^{-3}$                                                                                                                 &                                                                                    &                      \\ \cline{2-3} \cline{5-8} 
                   & \multirow{2}{*}{$0.1$}       & $12$       &                         & \multirow{2}{*}{}         & $-49.7107$                                                                                                                              & $44.9383$                                                                          & Fig.\ref{Fig:integer}(b) \\ \cline{3-3} \cline{6-8} 
                   &                              & $30$       &                         &                           & $-0.4389$                                                                                                                               & $0.4000$                                                                           & Fig.\ref{Fig:integer}(c) \\ \hline\hline
\multirow{6}{*}{4} & \multirow{2}{*}{$0$}         & $10$       & \multirow{4}{*}{$1200$} & \multirow{4}{*}{Periodic} & $1.4427$                                                                                                                                & \multirow{4}{*}{N/A}                                                               & Fig.\ref{Fig:integer}(d) \\ \cline{3-3} \cline{6-6} \cline{8-8} 
                   &                              & $30$       &                         &                           & $5.4332 \times 10^{-4}$                                                                                                                 &                                                                                    &                      \\ \cline{2-3} \cline{6-6} \cline{8-8} 
                   & \multirow{2}{*}{$\delths/2$} & $10$       &                         &                           & $2.8509$                                                                                                                                &                                                                                    &                      \\ \cline{3-3} \cline{6-6} \cline{8-8} 
                   &                              & $30$       &                         &                           & $4.0290 \times 10^{-4}$                                                                                                                 &                                                                                    &                      \\ \cline{2-8} 
                   & \multirow{2}{*}{$0.1$}       & $10$       & $212$                   & \multirow{2}{*}{}         & $65.6032$                                                                                                                               & $-13.5048$                                                                         & Fig.\ref{Fig:integer}(e) \\ \cline{3-4} \cline{6-8} 
                   &                              & $30$       & $1200$                  &                           & $2.3182 \times 10^{-3}$                                                                                                                 & $-1.6747 \times 10^{-3}$                                                           &                      \\ \hline\hline
\multirow{6}{*}{6} & \multirow{2}{*}{$0$}         & $ 8$       & \multirow{4}{*}{$1200$} & \multirow{4}{*}{Periodic} & $2.4646$                                                                                                                                & \multirow{4}{*}{N/A}                                                               & Fig.\ref{Fig:integer}(f), \ref{Fig:cobweb} \\ \cline{3-3} \cline{6-6} \cline{8-8} 
                   &                              & $30$       &                         &                           & $9.4188 \times 10^{-5}$                                                                                                                 &                                                                                    &                      \\ \cline{2-3} \cline{6-6} \cline{8-8} 
                   & \multirow{2}{*}{$\delths/2$} & $ 8$       &                         &                           & $4.2155$                                                                                                                                &                                                                                    &                      \\ \cline{3-3} \cline{6-6} \cline{8-8} 
                   &                              & $30$       &                         &                           & $9.2024 \times 10^{-5}$                                                                                                                 &                                                                                    &                      \\ \cline{2-8} 
                   & \multirow{2}{*}{$0.1$}       & $ 8$       & $516$                   & \multirow{2}{*}{}         & $68.2575$                                                                                                                               & $-7.4799$                                                                          &                      \\ \cline{3-4} \cline{6-8} 
                   &                              & $30$       & $1200$                  &                           & $9.4333 \times 10^{-5}$                                                                                                                 & $-5.0463 \times 10^{-8}$                                                           &                      \\ \hline
\end{tabular}
}
\end{table}
\noindent and assume the negative value of $P$ in \eqref{eq:P}. The simulations are carried out for multiple values of $\delths$. The initial conditions are chosen with $\theta_1 \in \{0, \delths/2, 0.1\}$ and two choices of $\omega_1$ for each $\theta_1$. We choose a small value of $\omega_1$ that permits solutions to \eqref{eq:omkp1-final} for all $k$; the other value is chosen to be equal to $30$, which is sufficiently large. It must be noted that the small value of $\omega_1$ may lead to \eqref{eq:bound-omk-real-sol-taylor} being violated while still permitting real, feasible solutions to \eqref{eq:omkp1-final}; in such cases, the series expansion \eqref{eq:omkp1-final-taylor} is not convergent. Simulations are carried out for a large number of steps, to study the long-term evolution of the system. To compare the behavior of the dynamical system with the approximate solution, $\bar E$ is evaluated from \eqref{eq:E-discrete} based on the initial conditions. The values of $\omega_k$ are then obtained from \eqref{eq:om-periodic-discrete}, and the maximum error with the exact solution is tabulated. Polar plots of $(\theta_k, \omega_k)$, with $\omega_k$ represented in the radial direction, are presented to show the evolution of system trajectories for a select few choices of initial conditions and $\delths$; the exact and approximate solutions are shown using black and red, respectively.\ 

\subsection{$\delths$: Integer submultiple of $2\pi$} \label{sec5.1}

We consider cases with $\delths$ given by \eqref{eq:delths}, for which it was established in Section \ref{sec:proof-periodicity} that the system exhibits periodic behavior for $\theta_1 \in \{0, \delths/2\}$ and $\omega_1$ satisfying Assumption \ref{asm:omk-min-periodic}. This is validated by simulation for $N = 3, 4, 6$. The solutions obtained satisfy Corollary \ref{cor:stable}, as expected. When $\theta_1 = 0.1$, it is observed that the system trajectories are not periodic; for $N = 3$, the value of $\omega_k$ increases gradually, and for $N = 4, 6$, $\omega_k$ decreases gradually. The simulation window in the latter two cases is reduced from 1200 since \eqref{eq:omkp1-final} ceases to permit solutions when $\omega_k$ reduces below a certain value. Similar behavior can be observed for other values of $\theta_1 \notin \{0, \delths/2\}$.

The simulation results are presented in Table \ref{tab:integer}. The third-to-last column shows the maximum percentage error between exact values of $\omega_k$ obtained from \eqref{eq:omkp1-final}, and values predicted by \eqref{eq:om-periodic-discrete}. As expected, these errors have larger magnitudes for smaller values of $\omega_1$. Further, the error magnitudes are greater when $\theta_1 = 0.1$ owing to the gradual `drift' in $\omega_k$ from $\omega_1$. The second-to-last column of Table \ref{tab:integer} shows the percentage drift in the value of $\omega_k$ from $\omega_1$ within the simulation window when $\theta_k$ returns to $\theta_1$. The drift is zero when $\theta_1 \in \{0, \delths/2\}$ since the system is periodic. When $\theta_1 = 0.1$, the magnitude of the drift is greater for lower values of $\omega_1$. It is also evident that the system behavior is closer to periodic for larger values of $N$ and larger values of $\omega_1$ if $\theta_1 \notin \{0, \delths/2\}$. Polar plots corresponding to a select few entries in Table \ref{tab:integer} are shown in Fig.\ref{Fig:integer}.
\begin{figure}[t!]
\centering
\psfrag{A}[][]{\scriptsize{$0$}}
\psfrag{B}[][]{\scriptsize{$\pi/2$}}
\psfrag{C}[][]{\scriptsize{$\pi$}}
\psfrag{D}[][]{\scriptsize{$3\pi/2$}}
\includegraphics[width=0.91\hsize]{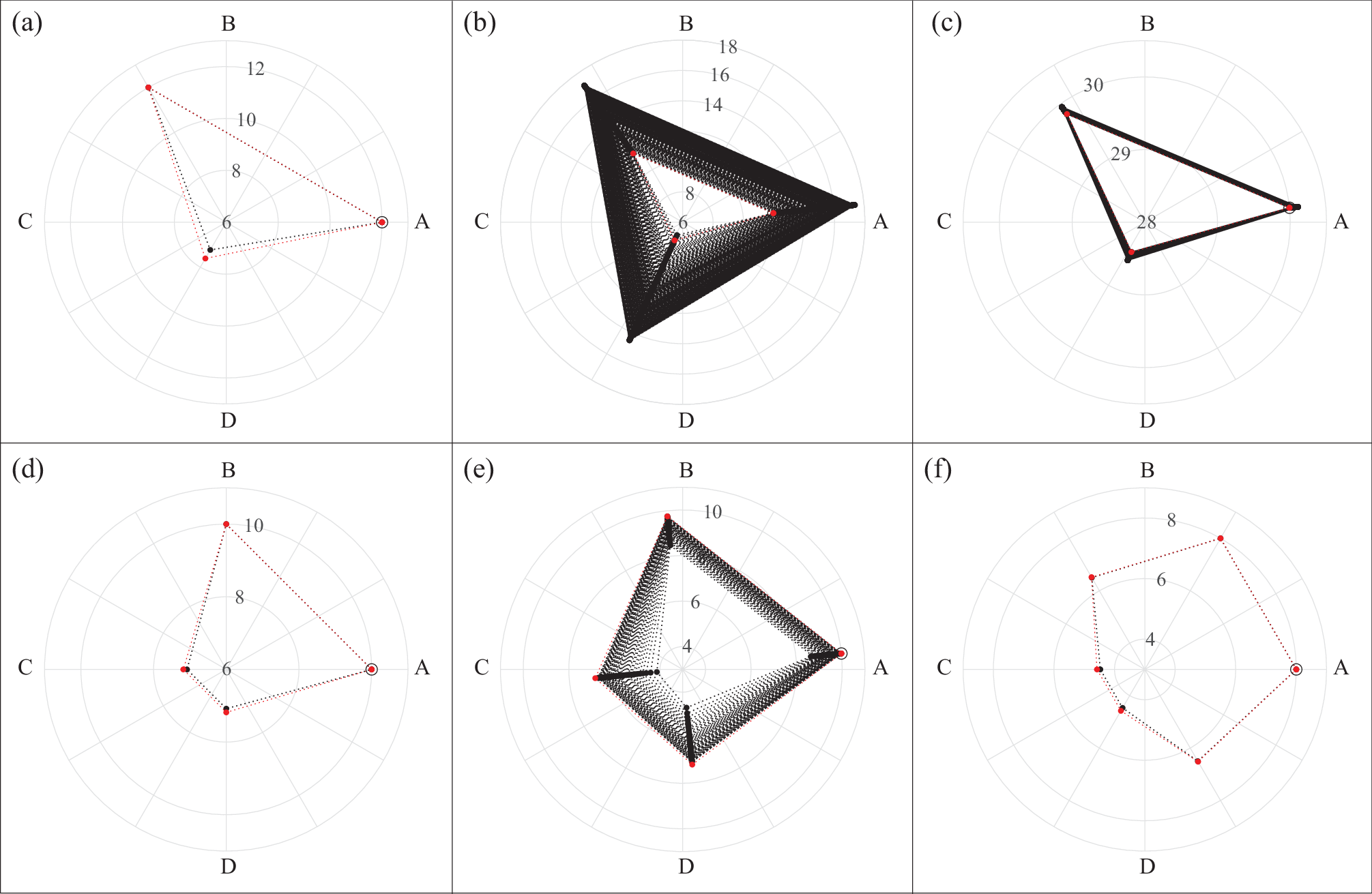}
\caption{Polar plots of $(\theta_k, \omega_k)$ for $\delths = 2\pi/N$ with $(N, \theta_1, \omega_1) =$ (a) $3, 0, 12$, (b) $3, 0.1, 12$, (c) $3, 0.1, 30$, (d) $4, 0, 10$, (e) $4, 0.1, 10$, (f) $6, 0, 8$.}
\label{Fig:integer}
\end{figure}
\begin{figure}[h!]
    \centering
    \includegraphics[width=0.99\linewidth]{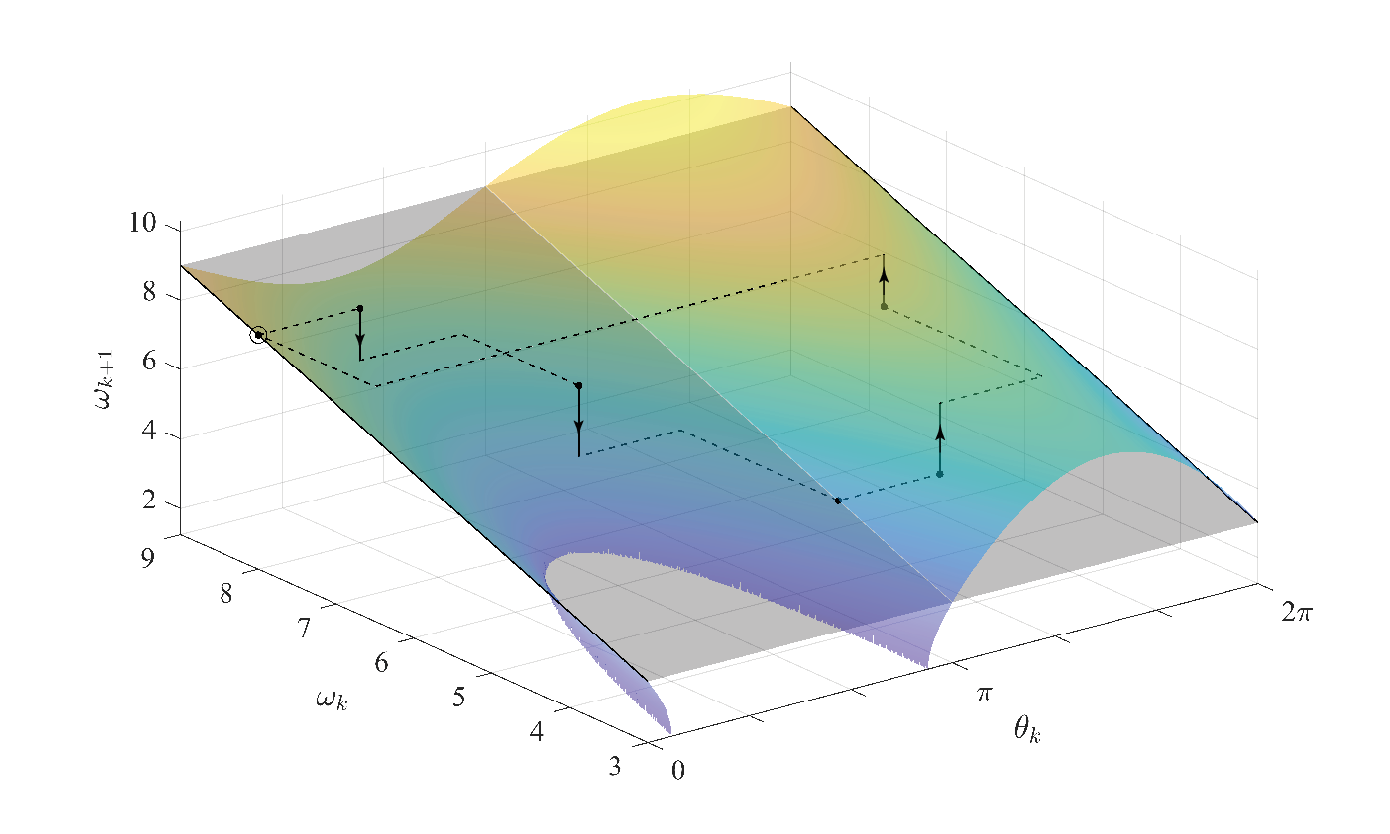}
    \caption{Cobweb-like plot corresponding to Fig.\ref{Fig:integer}(f): $\delths = \pi/3$ with $\theta_1 = 0$ and $\omega_1 = 8$.}
    \label{Fig:cobweb}
\end{figure}

The periodic orbit shown in Fig.\ref{Fig:integer}(f), \emph{i.e.}, case with $N = 6$, $\theta_1 = 0$ and $\omega_1 = 8$, is also shown with the help of the cobweb-like plot in Fig.\ref{Fig:cobweb}. With $\omega_{k+1}$ along the vertical axis, the surface defined by \eqref{eq:omkp1-final} is shown in color; it is not defined where \eqref{eq:bound-omk-real-sol} is violated. The surface $\omega_{k+1} = \omega_k$ is shown in gray. At $\theta_k \in \{0, \pi\}$, these two surfaces coincide. For any pair $(\theta_k, \omega_k)$ lying on the gray surface, the value of $\omega_{k+1}$ is found by drawing a vertical line onto the surface defined by \eqref{eq:omkp1-final} (solid line with arrows). The point on the gray surface for the next iteration is obtained by drawing a line of length $\delths$ along the $\theta_k$ axis followed by a line along the $\omega_k$ axis until it intersects the gray surface (dashed lines). The process can then be repeated to obtain a closed path describing the periodic orbit. As expected, the vertical arrows have length zero at $\theta_k = 0$ and $\theta_k = \pi$. Similar plots can be obtained for any other simulation case.

For the same periodic orbit discussed above, the Jacobian matrices $J_k$ in \eqref{eq:floquet-matrix} for $k = 1, 2, \dots, 6$, are given by
\begin{equation*}
\begin{split}
    J_1 &= \begin{bmatrix} 1 & 0 \\ -1.5528 & 1.0000 \end{bmatrix},\, 
    J_2 = \begin{bmatrix} 1 & 0 \\ -0.9923 & 1.2422 \end{bmatrix},\, 
    J_3 = \begin{bmatrix} 1 & 0 \\  1.6049 & 1.5428 \end{bmatrix}, \\
    J_4 &= \begin{bmatrix} 1 & 0 \\  2.7796 & 1.0000 \end{bmatrix},\, 
    \,\,\,\,\,J_5 = \begin{bmatrix} 1 & 0 \\  1.0402 & 0.6482 \end{bmatrix},\, 
    \,\,\,\,J_6 = \begin{bmatrix} 1 & 0 \\ -0.7988 & 0.8050 \end{bmatrix}
\end{split}
\end{equation*}

\noindent which result in 
\begin{equation*}
    M = \begin{bmatrix} 1 & 0 \\ -0.0252 & 1.0000 \end{bmatrix}
\end{equation*}

\noindent Both eigenvalues of $M$ are equal to $1$, implying stability of the orbit.

\subsection{$\delths$: Rational submultiple of $2\pi$} \label{sec5.2}

We consider cases with $\delths = (p/q)2\pi$, where $p, q \in Z^+$, and $\gcd(p, q) = 1$. Simulation results for $p/q = 3/7$, $2/9$, and $4/21$ are presented in Table \ref{tab:rational}. Here, it is not known whether the system is periodic for any initial conditions. However, for $\theta_1 \in \{0, \delths/2\}$, the behavior appears to be periodic with negligible drift in the magnitude of $\omega_k$ from $\omega_1$ when $\theta_k$ returns to $\theta_1$, for all choices of $p/q$ and $\omega_1$. For $\theta_1 = 0.1$, a significant drift in $\omega_k$ is observed for $p/q = 3/7$ and $p/q = 2/9$ for the smaller value of $\omega_1$; for $p/q = 4/21$, the drift is relatively small for the smaller value of $\omega_1$. For each value of $p/q$, the magnitude of the drift is much smaller for $\omega_1 = 30$. Further, as expected, the error between the exact value of of $\omega_k$ and its value predicted by \eqref{eq:om-periodic-discrete} is smaller for larger values of $\omega_1$ in all cases. The behavior of the system is closer to periodic as $p/q$ decreases and $\omega_1$ increases. Polar plots corresponding to to a select few entries in Table \ref{tab:rational} are shown in Fig.\ref{Fig:rational}.

\begin{table}[t!] \label{tab:rational}
\centering
\caption{Simulation Results for $\delths = (p/q)2\pi$}
\scalebox{0.91}{
\begin{tabular}{|c|c|c|c|c|c|c|c|}
\hline
$p/q$                   & $\theta_1$                   & $\omega_1$ & Steps                   & Notes                     & \begin{tabular}[c]{@{}c@{}}Maximum $\%$ error \\[-0.5ex] between exact $\omega_k$\\[-0.5ex] and value from \eqref{eq:om-periodic-discrete}\end{tabular} & \begin{tabular}[c]{@{}c@{}}$\%$ drift in $\omega_k$\\[-0.5ex] from $\omega_1$\end{tabular} & Illustration              \\ \hline\hline
\multirow{6}{*}{$3/7$}  & \multirow{2}{*}{$0$}         & $19$       & \multirow{6}{*}{$1200$} & \multirow{4}{*}{Periodic} & $10.9992$                                                                                                                               & $-7.1019 \times 10^{-9}$                                                           & Fig.\ref{Fig:rational}(a) \\ \cline{3-3} \cline{6-8} 
                        &                              & $30$       &                         &                           & $0.1312$                                                                                                                                & $-7.7179 \times 10^{-10}$                                                          &                           \\ \cline{2-3} \cline{6-8} 
                        & \multirow{2}{*}{$\delths/2$} & $22$       &                         &                           & $13.5781$                                                                                                                               & $8.2037 \times 10^{-9}$                                                            &                           \\ \cline{3-3} \cline{6-8} 
                        &                              & $30$       &                         &                           & $0.3451$                                                                                                                                & $5.9093 \times 10^{-12}$                                                           &                           \\ \cline{2-3} \cline{5-8} 
                        & \multirow{2}{*}{$0.1$}       & $19$       &                         & \multirow{2}{*}{}         & $18.1939$                                                                                                                               & $8.4108$                                                                           & Fig.\ref{Fig:rational}(b) \\ \cline{3-3} \cline{6-8} 
                        &                              & $30$       &                         &                           & $0.1383$                                                                                                                                & $4.5355 \times 10^{-3}$                                                            & Fig.\ref{Fig:rational}(c) \\ \hline\hline
\multirow{6}{*}{$2/9$}  & \multirow{2}{*}{$0$}         & $ 8$       & \multirow{6}{*}{$1200$} & \multirow{4}{*}{Periodic} & $16.1407$                                                                                                                               & $7.1658 \times 10^{-9}$                                                            & Fig.\ref{Fig:rational}(d) \\ \cline{3-3} \cline{6-8} 
                        &                              & $30$       &                         &                           & $2.7761 \times 10^{-4}$                                                                                                                 & $-2.1790 \times 10^{-11}$                                                          &                           \\ \cline{2-3} \cline{6-8} 
                        & \multirow{2}{*}{$\delths/2$} & $ 9$       &                         &                           & $6.1096$                                                                                                                                & $2.9724 \times 10^{-11}$                                                           &                           \\ \cline{3-3} \cline{6-8} 
                        &                              & $30$       &                         &                           & $2.6540 \times 10^{-4}$                                                                                                                 & $7.1054 \times 10^{-14}$                                                           &                           \\ \cline{2-3} \cline{5-8} 
                        & \multirow{2}{*}{$0.1$}       & $ 9$       &                         & \multirow{2}{*}{}         & $2.8702$                                                                                                                                & $-0.1268$                                                                          & Fig.\ref{Fig:rational}(e) \\ \cline{3-3} \cline{6-8} 
                        &                              & $30$       &                         &                           & $2.8351 \times 10^{-4}$                                                                                                                 & $-1.9457 \times 10^{-11}$                                                          &                           \\ \hline\hline
\multirow{6}{*}{$4/21$} & \multirow{2}{*}{$0$}         & $ 8$       & \multirow{6}{*}{$1200$} & \multirow{4}{*}{Periodic} & $7.4877$                                                                                                                                & $3.3809 \times 10^{-10}$                                                           & Fig.\ref{Fig:rational}(f) \\ \cline{3-3} \cline{6-8} 
                        &                              & $30$       &                         &                           & $1.5035 \times 10^{-4}$                                                                                                                 & $2.3791 \times 10^{-11}$                                                           &                           \\ \cline{2-3} \cline{6-8} 
                        & \multirow{2}{*}{$\delths/2$} & $ 8$       &                         &                           & $11.7000$                                                                                                                               & $-1.2212 \times 10^{-12}$                                                          &                           \\ \cline{3-3} \cline{6-8} 
                        &                              & $30$       &                         &                           & $1.4776 \times 10^{-4}$                                                                                                                 & $-3.7896 \times 10^{-13}$                                                          &                           \\ \cline{2-3} \cline{5-8} 
                        & \multirow{2}{*}{$0.1$}       & $ 8$       &                         & \multirow{2}{*}{}         & $9.4883$                                                                                                                                & $-6.8383 \times 10^{-4}$                                                           &                           \\ \cline{3-3} \cline{6-8} 
                        &                              & $30$       &                         &                           & $1.5133 \times 10^{-4}$                                                                                                                 & $2.0416 \times 10^{-11}$                                                           &                           \\ \hline
\end{tabular}}
\end{table}

\begin{figure}[t!]
\centering
\psfrag{A}[][]{\scriptsize{$0$}}
\psfrag{B}[][]{\scriptsize{$\pi/2$}}
\psfrag{C}[][]{\scriptsize{$\pi$}}
\psfrag{D}[][]{\scriptsize{$3\pi/2$}}
\includegraphics[width=0.91\hsize]{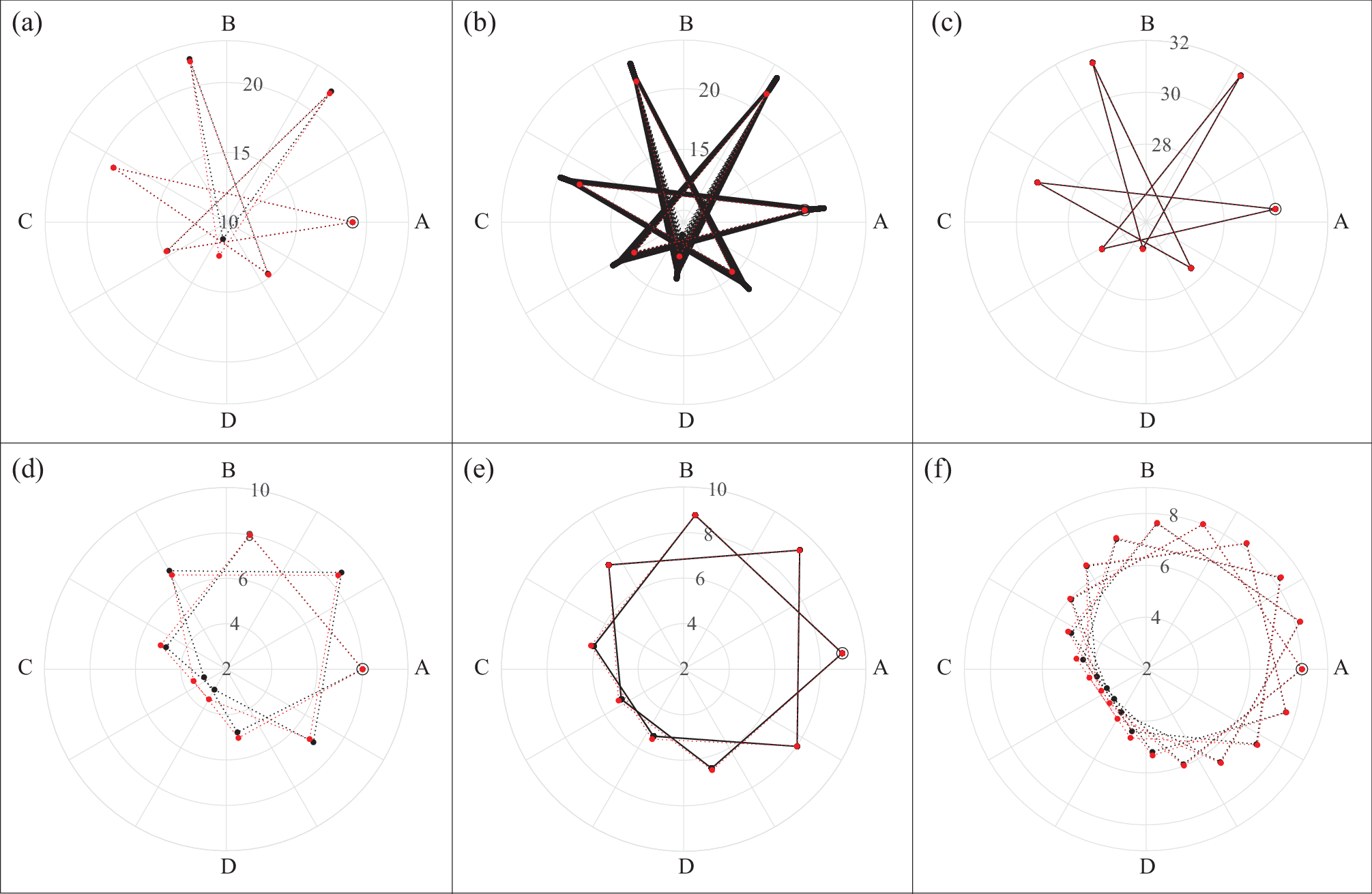}
\caption{Polar plots of $(\theta_k, \omega_k)$ for $\delths = (p/q) 2\pi$ with $(N, \theta_1, \omega_1) =$ (a) $3/7, 0, 19$, (b) $3/7, 0.1, 19$, (c) $3/7, 0.1, 30$, (d) $2/9, 0, 8$, (e) $2/9, 0.1, 9$, (f) $4/21, 0, 8$.}
\label{Fig:rational}
\end{figure}

\subsection{$\delths$: Irrational submultiple of $2\pi$} \label{sec5.3}

We now consider the case $\delths = (\sqrt{2}/5) 2\pi$. The map \eqref{eq:thkp1-zero-dyn} then represents an irrational rotation, and contains no periodic points. Consequently, the overall system \eqref{eq:thkp1-zero-dyn} and \eqref{eq:omkp1-final} can have no periodic points. Over time, however, the iterations $\theta_k$ densely occupy points in $S^1$. We simulate the system over 1200 steps with $\theta_1 = 0$ and $\omega_1 = 10$, $20$, and $30$. Since $\theta_k$ values will densely fill the circle, the qualitative behavior of the system will be the same regardless of $\theta_1$, and different choices of $\theta_1$ need not be considered.

When $\omega_1 = 10$, the maximum error between the exact $\omega_k$ and its values predicted from \eqref{eq:om-periodic-discrete} is $8.1788 \%$. This error drops to $1.7975 \times 10^{-2} \%$ for $\omega_1 = 20$, and $1.3405 \times 10^{-3} \%$ for $\omega_1 = 30$. When $\omega_1 = 10$, the maximum $\omega_k$ in simulation is found to be $10.5850$ at $\theta_k = 0.8883 \approx \delths/2$, and the minimum $\omega_k$ is $5.3506$ at $\theta_k = 4.0319 \approx \pi + \delths/2$, as predicted by \eqref{eq:om-periodic-discrete}. The maxima and minima of $\omega_k$ for the other choices of $\omega_1$ also occur at $\theta_k \approx \delths/2$ and $\theta_k \approx \pi + \delths/2$ respectively. The polar plot for the first 60 simulation steps with $\omega_1 = 10$ is shown in Fig.\ref{Fig3} (a).

\begin{remark}
    The results in Sections \ref{sec5.1}, \ref{sec5.2}, and \ref{sec5.3} demonstrate that \eqref{eq:om-periodic-discrete} accurately captures the qualitative behavior of the dynamical system, and, for sufficiently large values of $\omega_k$, the quantitative behavior of the system.
\end{remark}

\subsection{Limiting value of $\delths$} \label{sec5.4}

We consider the limiting case $\delths \rightarrow 0$, which is achieved in simulation by considering a large value of $N$ in \eqref{eq:delths}. The behavior of the system is expected to match the behavior of \eqref{eq:simple-pendulum}. 

For the initial conditions $\theta_1 = 0$ and $\omega_1 = 6.4$, we simulate \eqref{eq:thkp1-zero-dyn} and \eqref{eq:omkp1-final} for 1200 steps with $N = 120$, or $\delths = 0.0524$. The maximum error between the exact $\omega_k$ and its values predicted from \eqref{eq:om-periodic-discrete} is $0.2882 \%$, which is small considering the small initial $\omega_1$. Further, the maximum error between the exact $\omega_k$ and its values predicted from \eqref{eq:om-periodic-pendulum}, with $E$ computed using \eqref{eq:integral-of-motion-pendulum}, is $3.5168 \%$. These errors drop to $2.8311 \times 10^{-3} \%$ and $0.3100 \%$ respectively when $N$ is increased to $1200$, \emph{i.e.} $\delths = 0.0052$, for the same initial conditions. The polar plot for the case $N = 120$ is shown in Fig.\ref{Fig3} (b).

Further, if $\theta_1$ is changed to $0.1$ (or any value different from $0$ or $\delths/2$), the observed magnitude of the drift in $\omega_k$ from $\omega_1$ (when $\theta_k$ returns to $\theta_1$) is negligibly small even when the number of simulation steps is increased drastically, implying periodic behavior.\

\begin{remark}
    The results in Sections \ref{sec5.1}, \ref{sec5.2}, \ref{sec5.3} and \ref{sec5.4} all assume the negative value of $P$. For every initial condition considered, the positive square root solution in \eqref{eq:omkp1-final} provides feasible solutions. In particular, when $\delths$ is given by \eqref{eq:delths}, \eqref{eq:omkp1-final} provides the periodic solution when $\theta_1 \in \{0, \delths/2\}$. 
\end{remark}

\begin{figure}[t!]
\centering
\psfrag{A}[][]{\scriptsize{$0$}}
\psfrag{B}[][]{\scriptsize{$\pi/2$}}
\psfrag{C}[][]{\scriptsize{$\pi$}}
\psfrag{D}[][]{\scriptsize{$3\pi/2$}}
\includegraphics[width=0.91\hsize]{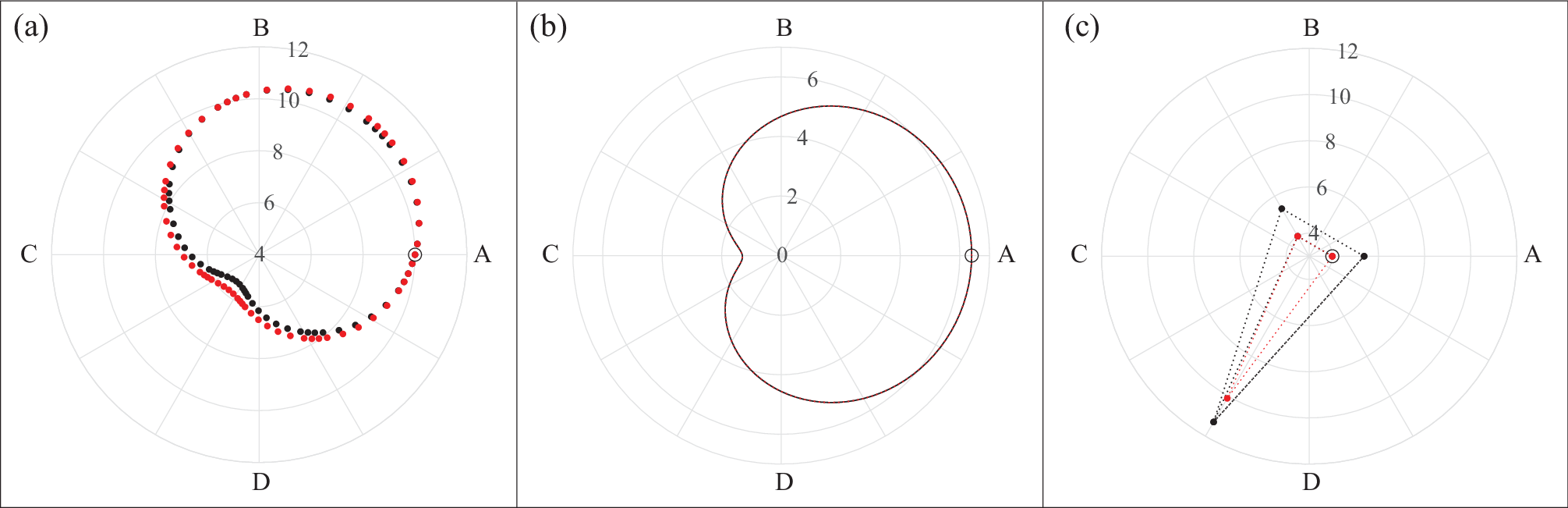}
\caption{Polar plots of $(\theta_k, \omega_k)$ for (a) $\delths = (\sqrt{2}/5) 2\pi$, $\theta_1 = 0$, and $\omega_1 = 10$, (b) $\delths = \pi/60$, $\theta_1 = 0$, and $\omega_1 = 6.4$, (c) $\delths = 2\pi/3$, $\theta_1 = 0$, and $\omega_1 = 4$, when $P$ is positive.}
\label{Fig3}
\end{figure}
\begin{table}[b!] \label{tab:posP}
\centering
\caption{Simulation Results for $\delths = 2\pi/3$ when $P$ is +ve}
\scalebox{0.96}{
\begin{tabular}{|c|c|c|c|c|c|c|}
\hline
$N$ &
  $\theta_1$ &
  $\omega_1$ &
  Steps &
  Notes &
  \begin{tabular}[c]{@{}c@{}}Maximum $\%$ error \\[-0.5ex] between exact $\omega_k$\\[-0.5ex] and value from \eqref{eq:om-periodic-discrete}\end{tabular} &
  \begin{tabular}[c]{@{}c@{}}$\%$ drift in $\omega_k$\\ [-0.5ex] from $\omega_1$\end{tabular} \\ \hline\hline
\multirow{6}{*}{3} & \multirow{2}{*}{$0$}         & $12$ & \multirow{4}{*}{$1200$} & \multirow{4}{*}{Periodic} & $-0.2626$                & \multirow{4}{*}{N/A} \\ \cline{3-3} \cline{6-6}
                   &                              & $30$ &                         &                           & $-2.2774 \times 10^{-3}$ &                      \\ \cline{2-3} \cline{6-6}
                   & \multirow{2}{*}{$\delths/2$} & $12$ &                         &                           & $-0.2626$                &                      \\ \cline{3-3} \cline{6-6}
                   &                              & $30$ &                         &                           & $-2.2774 \times 10^{-3}$ &                      \\ \cline{2-7} 
                   & \multirow{2}{*}{$0.1$}       & $12$ & $300$                   & \multirow{2}{*}{}         & $57.5809$                & $-36.5406$           \\ \cline{3-4} \cline{6-7} 
                   &                              & $30$ & $1200$                  &                           & $0.3522$                 & $-0.3509$            \\ \hline
\end{tabular}}
\end{table}

\subsection{Positive Choice of $P$}

We now consider the positive value of $P$ in \eqref{eq:P} and simulate cases with $\delths = 2\pi/3$ and $\delths = (\sqrt{2}/5) 2\pi$.

Simulation results for $\delths = 2\pi/3$ are shown in Table \ref{tab:posP}, for the same initial conditions as were considered in Table \ref{tab:integer} for $N = 3$. Again, periodic behavior is seen for the positive square root solution in \eqref{eq:omkp1-final} when $\theta_1 \in \{0, \delths/2\}$, and a drift in $\omega_k$ is seen when $\theta_1 = 0.1$. The magnitude of the drift is also smaller when $\omega_1$ is larger.

For $\delths = 2\pi/3$, we also consider the initial condition $\theta_1 = 0$ and $\omega_1 = 4$, for which Assumption \ref{asm:omk-min-periodic} is violated, since $\omega_1 < \sqrt{P}$. In this case, using the negative square root solution in \eqref{eq:omkp1-quadratic-sol} at $\theta_k = 4\pi/3$ and the positive square root solution in \eqref{eq:omkp1-final} for all other values of $\theta_k$ provides the periodic solution. Using the positive square root solution at $k = 3$ results in $\omega_4 = 5.3789 \neq \omega_1$. However, solutions from these initial conditions, \emph{i.e.}, $\theta_4 = \theta_1$ and $\omega_4$ are periodic using the positive square root solution in \eqref{eq:omkp1-final}; this is true because $\omega_4$ no longer violates Assumption \ref{asm:omk-min-periodic}. The polar plot for this case in shown in Fig.\ref{Fig3} (c).

We finally consider $\delths = (\sqrt{2}/5) 2\pi$ and simulate 1200 steps. With the initial conditions $\theta_1 = 0$ and $\omega_1 = 10$, the maximum error between the exact $\omega_k$ and its values from \eqref{eq:om-periodic-discrete} is $-0.3894 \%$. The minimum and maximum values of $\omega_k$, $9.2234$ and $12.9270$, are achieved at $\theta_k = 0.8883 \approx \delths/2$ and $\theta_k = 4.0319 \approx \pi + \delths/2$ respectively.

The change in the sign of $P$ affects the qualitative behavior of the system, best explained by \eqref{eq:om-periodic-discrete}. In particular, the change in the sign of $P$ causes the minima and the maxima to change locations.

\section{Conclusion} \label{sec6}
This paper considered a two-dimensional discrete-time dynamical system with the two states representing an angle and angular velocity. 
% The angle evolves by a fixed value in every iteration. 
When the angle evolves by an integer submultiple of $2\pi$ in every iteration, it is proved that the system is periodic for specific initial angles but arbitrary initial angular velocities greater than some minimum value. Every such periodic trajectory is stable but not attractive. For other initial angles, simulations show that periodicity is lost. 

As the angle step size tends to zero, the discrete-time system represents the dynamics of a simple pendulum, which possesses an integral of motion. Based on this integral of motion, an approximate invariant for the discrete-time dynamical system is found; this allows the angular velocity to be expressed as an explicit periodic function of the angle. It is shown that this expression closely approximates the behavior of the dynamical system for sufficiently large angular velocities. 

Extensive simulation results document the behavior of the dynamical system for a range of angle step sizes and initial conditions, showing periodic and aperiodic behavior of the system. 

\bibliographystyle{siamplain}
\bibliography{references}

\newpage

\appendix
\section{Proof of Lemma \ref{lem:delths}} \label{app:proof}

% \begin{proof}
It follows from \eqref{eq:thkp1-zero-dyn} and \eqref{eq:delths} that $\theta_{N+1} = \theta_1 + 2\pi = \theta_1$, which establishes periodicity of $\theta$. For any value of $N$ in \eqref{eq:delths}, we can express $\theta_k$ as follows
\begin{equation} \label{eq:thk-from-pi-delths}
    \theta_k = \frac{\delths}{2} + (k-1) \delths = \pi + (2k - N - 1) \frac{\delths}{2}
\end{equation}

\noindent from which it follows that
\begin{equation} \label{eq:sin-from-pi-delths}
    \sin\theta_k = - \sin\left[(2k - N - 1) \frac{\delths}{2}\right]
\end{equation}

\noindent Further, using \eqref{eq:sin-from-pi-delths} in \eqref{eq:omkp1-zero-dyn-reduced}, we get
\begin{equation} \label{eq:omkp1-zero-dyn-from-pi-delths}
    \omega_{k+1} - \omega_k = - P \sin\left[(2k - N - 1) \frac{\delths}{2}\right] \left[ \frac{1}{\omega_k} + \frac{1}{\omega_{k+1}} \right]
\end{equation}

\noindent We prove periodicity of $\omega$ separately for even and odd choices of $N$.

\paragraph{Even $N$}

Substituting $k = \frac{N}{2}$ and $k = \frac{N}{2}+1$ in \eqref{eq:omkp1-zero-dyn-from-pi-delths}, we obtain
\begin{align} 
    \omega_{\frac{N}{2}+1} - \omega_{\frac{N}{2}} &= P \sin\left(\frac{\delths}{2}\right) \left[ \frac{1}{\omega_{\frac{N}{2}}} + \frac{1}{\omega_{\frac{N}{2}+1}} \right] \label{eq:even-Nby2-delths} \\
    \omega_{\frac{N}{2}+2} - \omega_{\frac{N}{2}+1} &= - P \sin\left(\frac{\delths}{2}\right) \left[ \frac{1}{\omega_{\frac{N}{2}+1}} + \frac{1}{\omega_{\frac{N}{2}+2}} \right] \label{eq:even-Nby2p1-delths}
\end{align}

\noindent Adding the above two equations, we obtain
\begin{equation}
    \omega_{\frac{N}{2}+2} - \omega_{\frac{N}{2}} = P \sin\left(\frac{\delths}{2}\right) \left[ \frac{1}{\omega_{\frac{N}{2}}} - \frac{1}{\omega_{\frac{N}{2}+2}} \right]
\end{equation}
which simplifies to
\begin{equation}
    \omega_{\frac{N}{2}+2} - \omega_{\frac{N}{2}} = P \sin\left(\frac{\delths}{2}\right) \left[ \frac{\omega_{\frac{N}{2}+2} - \omega_{\frac{N}{2}}}{\omega_{\frac{N}{2}}\omega_{\frac{N}{2}+2}} \right]
\end{equation}
From Assumption \ref{asm:omk-min-periodic}, it follows
\begin{equation} \label{eq:equal-delthsby2-delths}
    \omega_{\frac{N}{2}+2} = \omega_{\frac{N}{2}}
\end{equation}

\noindent Similarly, choosing $k = \frac{N}{2}-1$ and $k = \left(\frac{N}{2}+1\right) + 1$ in \eqref{eq:omkp1-zero-dyn-from-pi-delths}, we obtain
\begin{align} 
    \omega_{\frac{N}{2}} - \omega_{\frac{N}{2}-1} &= P \sin\left(\frac{3\delths}{2}\right) \left[ \frac{1}{\omega_{\frac{N}{2}-1}} + \frac{1}{\omega_{\frac{N}{2}}} \right] \label{eq:even-Nby2m1-delths} \\
    \omega_{\frac{N}{2}+3} - \omega_{\frac{N}{2}+2} &= - P \sin\left(\frac{3\delths}{2}\right) \left[ \frac{1}{\omega_{\frac{N}{2}+2}} + \frac{1}{\omega_{\frac{N}{2}+3}} \right] \label{eq:even-Nby2p2-delths}
\end{align}

\noindent Adding the above two equations, we obtain
\begin{equation}
    \omega_{\frac{N}{2}+3} - \omega_{\frac{N}{2}+2} + \omega_{\frac{N}{2}} - \omega_{\frac{N}{2}-1} = P \sin\left(\frac{3\delths}{2}\right) \left[ \frac{1}{\omega_{\frac{N}{2}-1}} + \frac{1}{\omega_{\frac{N}{2}}} - \frac{1}{\omega_{\frac{N}{2}+2}} - \frac{1}{\omega_{\frac{N}{2}+3}} \right]
\end{equation}

\noindent Substituting \eqref{eq:equal-delthsby2-delths} in the above equation and simplifying, we obtain
\begin{equation}
    \omega_{\frac{N}{2}+3} - \omega_{\frac{N}{2}-1} = P \sin\left(\frac{3\delths}{2}\right) \left[ \frac{\omega_{\frac{N}{2}+3} - \omega_{\frac{N}{2}-1}}{\omega_{\frac{N}{2}-1}\omega_{\frac{N}{2}+3}} \right]
\end{equation}
Using Assumption \ref{asm:omk-min-periodic}, we get
\begin{equation} \label{eq:equal-3delthsby2-delths}
    \omega_{\frac{N}{2}+3} = \omega_{\frac{N}{2}-1}
\end{equation}

This process can be repeated for all pairs of values of $k = \frac{N}{2} - n$, $k = \left(\frac{N}{2} + 1\right) + n$, $n = 0, 1, \dots, \left(\frac{N}{2}-1\right)$ to establish that 
\begin{equation}  \label{eq:om-pairs-even-delths}
    \omega_{\frac{N}{2}+n+2} = \omega_{\frac{N}{2}-n}, \quad n = 0, 1, \dots, \left(\frac{N}{2}-1\right)
\end{equation}

\noindent In particular, if $n = \left(\frac{N}{2}-1\right)$ in the above equation, it follows that
\begin{equation} \label{eq:om-periodicity-even-delths}
    \omega_{N+1} = \omega_1
\end{equation}

\paragraph{Odd $N$}

Substituting $k = \frac{N+1}{2}$ in \eqref{eq:thk-from-pi-delths} and \eqref{eq:omkp1-zero-dyn-from-pi-delths}, we get $\theta_{\frac{N+1}{2}} = \pi$ and $\omega_{\frac{N+3}{2}} - \omega_{\frac{N+1}{2}} = 0$, from which it follows that
\begin{equation} \label{eq:equal-pi-delths}
    \omega_{\frac{N+3}{2}} = \omega_{\frac{N+1}{2}}
\end{equation}

\noindent Choosing $k = \frac{N+1}{2} - 1$ and $k = \frac{N+1}{2} + 1$ in \eqref{eq:omkp1-zero-dyn-from-pi-delths}, we obtain
\begin{align} 
    \omega_{\frac{N+1}{2}} - \omega_{\frac{N-1}{2}} &= P \sin(\delths) \left[ \frac{1}{\omega_{\frac{N-1}{2}}} + \frac{1}{\omega_{\frac{N+1}{2}}} \right] \label{eq:odd-Nm1by2-delths} \\
    \omega_{\frac{N+5}{2}} - \omega_{\frac{N+3}{2}} &= - P \sin(\delths) \left[ \frac{1}{\omega_{\frac{N+3}{2}}} + \frac{1}{\omega_{\frac{N+5}{2}}} \right] \label{eq:odd-Np3by2-delths}
\end{align}

\noindent Adding the above two equations, we obtain
\begin{equation}
    \omega_{\frac{N+5}{2}} - \omega_{\frac{N+3}{2}} + \omega_{\frac{N+1}{2}} - \omega_{\frac{N-1}{2}} = P \sin(\delths) \left[ \frac{1}{\omega_{\frac{N-1}{2}}} + \frac{1}{\omega_{\frac{N+1}{2}}} - \frac{1}{\omega_{\frac{N+3}{2}}} - \frac{1}{\omega_{\frac{N+5}{2}}} \right]
\end{equation}

\noindent Using \eqref{eq:equal-pi-delths}, the above equation simplifies to
\begin{equation}
    \omega_{\frac{N+5}{2}} - \omega_{\frac{N-1}{2}} = P \sin(\delths) \left[ \frac{\omega_{\frac{N+5}{2}} - \omega_{\frac{N-1}{2}}}{\omega_{\frac{N-1}{2}} \omega_{\frac{N+5}{2}}} \right]
\end{equation}
From Assumption \ref{asm:omk-min-periodic}, it follows
\begin{equation} \label{eq:equal-delths-delths}
    \omega_{\frac{N+5}{2}} = \omega_{\frac{N-1}{2}}
\end{equation}

\noindent Similarly, for $k = \frac{N+1}{2} - 2$ and $k = \frac{N+1}{2} + 2$ in \eqref{eq:omkp1-zero-dyn-from-pi-delths}, we obtain
\begin{align} 
    \omega_{\frac{N-1}{2}} - \omega_{\frac{N-3}{2}} &= P \sin(2\delths) \left[ \frac{1}{\omega_{\frac{N-3}{2}}} + \frac{1}{\omega_{\frac{N-1}{2}}} \right] \label{eq:odd-Nm3by2-delths} \\
    \omega_{\frac{N+7}{2}} - \omega_{\frac{N+5}{2}} &= - P \sin(2\delths) \left[ \frac{1}{\omega_{\frac{N+5}{2}}} + \frac{1}{\omega_{\frac{N+7}{2}}} \right] \label{eq:odd-Np5by2-delths}
\end{align}

\noindent Adding the above two equations, we obtain
\begin{equation}
    \omega_{\frac{N+7}{2}} - \omega_{\frac{N+5}{2}} + \omega_{\frac{N-1}{2}} - \omega_{\frac{N-3}{2}} = P \sin(2\delths) \left[ \frac{1}{\omega_{\frac{N-3}{2}}} + \frac{1}{\omega_{\frac{N-1}{2}}} - \frac{1}{\omega_{\frac{N+5}{2}}} - \frac{1}{\omega_{\frac{N+7}{2}}} \right]
\end{equation}

\noindent Using \eqref{eq:equal-delths-delths}, the above equation simplifies to
\begin{equation}
    \omega_{\frac{N+7}{2}} - \omega_{\frac{N-3}{2}} = P \sin(2\delths) \left[ \frac{\omega_{\frac{N+7}{2}} - \omega_{\frac{N-3}{2}}}{\omega_{\frac{N-3}{2}} \omega_{\frac{N+7}{2}}} \right]
\end{equation}
From Assumption \ref{asm:omk-min-periodic}, it follows
\begin{equation} \label{eq:equal-2delths-delths}
    \omega_{\frac{N+7}{2}} = \omega_{\frac{N-3}{2}}
\end{equation}

This process can be repeated for all pairs of values of $k = \frac{N+1}{2} - n$, $k = \frac{N+1}{2} + n$, $n = 0, 1, \dots, \left(\frac{N+1}{2}-1\right)$ to establish that 
\begin{equation}  \label{eq:om-pairs-odd-delths}
    \omega_{\frac{N+1}{2}+n+1} = \omega_{\frac{N+1}{2}-n}, \quad n = 0, 1, \dots, \left(\frac{N+1}{2}-1\right)
\end{equation}

\noindent In particular, if $n = \left(\frac{N+1}{2}-1\right)$ in the above equation, it follows that
\begin{equation} \label{eq:om-periodicity-odd-delths}
    \omega_{N+1} = \omega_1
\end{equation}

\noindent This concludes the proof.
%\end{proof}
\end{document}